\documentclass[twoside,a4paper,12pt,reqno]{amsart}
\usepackage{amsthm}
\usepackage{orcidlink}

\usepackage[a4paper,inner=2cm,outer=2cm,top=3cm,bottom=3cm]{geometry}
\usepackage[dvipsnames]{xcolor}
\usepackage{amsthm,amsmath,amsfonts,graphicx,geometry,lipsum,amsxtra}
\usepackage{hyperref}
\usepackage{mathrsfs}
\usepackage{verbatim}
\usepackage{tikz}
\usepackage{epigraph}

\numberwithin{equation}{section}
\usepackage{esint}
\usepackage{mathtools}

\usepackage{etoolbox}
\usepackage{mathtools}
\hypersetup{
    colorlinks=true,
      linkcolor=blue,
    citecolor=red,
    urlcolor=red,
    pdftitle={[BKP23] Determination of lower order perturbations of a polyharmonic operator in two dimensions},
    }
\usepackage{bbm}   
\usepackage{comment}
\usepackage{amsmath}
\usepackage{tikz}
\usepackage{epigraph}
\usepackage{lipsum}
\allowdisplaybreaks

\definecolor{titlepagecolor}{cmyk}{1,.60,0,.40}
\DeclareFixedFont{\titlefont}{T1}{ppl}{b}{it}{0.5in}

\def\th@plain{%
  \thm@notefont{}
  \itshape 
}
\def\th@definition{%
  \thm@notefont{}
  \normalfont 
}
\makeatother

\usepackage{srcltx} 
\usepackage{amscd}
\usepackage{amssymb}
\usepackage{amsmath}
\usepackage{pb-diagram}
\usepackage{graphicx}
\usepackage{hyperref}
\usepackage{array}
\usepackage[all]{xy}
\xyoption{matrix}
\xyoption{arrow}
\usepackage{pdfsync}
\usepackage{physics}
\usepackage{enumitem}
 
\usepackage{thmtools}
\usepackage{amssymb}

\DeclareUnicodeCharacter{2212}{-,+}
\usepackage[none]{hyphenat}
\usepackage{bm}
\usepackage{amsthm} 

\theoremstyle{plain}
\newtheorem{theorem}{Theorem}[section]
\newtheorem{lemma}[theorem]{Lemma}

\theoremstyle{definition}

\usepackage{cancel}
\usepackage{ulem} 

\calclayout

\begin{document}
\title[The Wave equation]{Inverse
problems for a coupled system of wave equations with point source-receiver data}

\author[R. Bhardwaj]{Rahul Bhardwaj\,\orcidlink{0009-0007-5122-7781}}
\address{Rahul Bhardwaj\\ Department of Mathematics, Indian Institute of Technology, Ropar, Rupnagar-140001, Punjab, India}
\email{bhardwaj161067@gmail.com, rahul.24maz0014@iitrpr.ac.in}

\author[M. Vashisth]{Manmohan Vashisth\,\orcidlink{0000-0002-3417-4055}}
\address{Manmohan Vashisth\\Department of Mathematics, Indian Institute of Technology, Ropar, Rupnagar-140001, Punjab, India.}
\email{manmohanvashisth@iitrpr.ac.in}

\vspace*{-1cm}
\begin{abstract}
The present manuscript consists of inverse problems for a coupled system of wave equations with potential in $\mathbb{R}^3$. By establishing the fundamental solution to the aforementioned operator, we study the uniqueness aspects of the inverse problem of recovering the matrix-valued potential coefficient from time-dependent measurements. We consider these inverse problems in two different cases: 
(i) the {\it coincident} setup, where the source and receiver are located at a single point, and (ii) the {\it non-coincidence or separated} setup, in which case source and receiver are situated at distinct locations. The problems considered here are under-determined; hence, some additional assumptions for the potential are expected to guarantee the uniqueness of the inverse problems considered in this article. We proved the desired uniqueness results under some extra assumptions on the coefficients. 
\medskip
		
		\noindent{\bf Keywords:}  Point source-receiver, Wave equation, Uniqueness, Inverse problems
		
		\noindent{\bf Mathematics Subject Classification (2020)}: 35A02; 35L10; 35L51; 35R30
\end{abstract}
\maketitle
\section{Introduction and statement of main results}
\subsection{Problem of interest}  We begin with considering  a coupled  system of  wave equations perturbed with a matrix-valued potential $((\beta_{ij}))_{1\leq i,j\leq 2}$ and a source located at $O:=(0,0,0)$ by 
\begin{align}\label{coupled equation}
    \begin{cases}
       \left( \Box - \beta_{11}(x)\right)U_{1}(t,x) - \beta_{12}(x)U_{2}(t,x) = \delta(t,x),&\quad(t,x)\in \mathbb{R} \times\mathbb{R}^3, \\
       \left( \Box - \beta_{22}(x)\right)U_{2}(t,x) - \beta_{21}(x)U_{1}(t,x) = \delta(t,x), &\quad(t,x)\in \mathbb{R} \times\mathbb{R}^3, \\
       U_{1}(t,x) = U_{2}(t,x) =0, &\quad(t,x)\in (-\infty, 0)\times\mathbb{R}^3.
    \end{cases}
\end{align}
 In Equation \eqref{coupled equation}, each real-valued functions $\beta_{ij}$, $1\leq i,j \leq 2$, is an infinitely differentiable  function defined on $\mathbb{R}^3$, and $\delta$ denotes the Dirac delta distribution  concentrated at $(O,0)$. If  the  matrix-valued potential $\mathfrak{P}$, the displacement vector  $\overrightarrow{U}(t,x)$ and $\overrightarrow{\mathscr{A}}$  are given by 
\begin{align}\label{notations}
   \mathfrak{P}(x) :=  \begin{bmatrix}
       \beta_{11}(x) &  \beta_{12}(x)\\
       \beta_{21}(x) &  \beta_{22}(x)
     \end{bmatrix},
    \quad \overrightarrow{U}(t,x):= \begin{bmatrix}
        U_1(t,x)\\
        U_2(t,x)
    \end{bmatrix}
    \quad \text{ and } \quad \overrightarrow{\mathscr{A}} := 
    \begin{bmatrix}
        1\\1
    \end{bmatrix},
\end{align}
then, the system of equations in \eqref{coupled equation} can be expressed  as follows 
\begin{align}\label{system of IVP}
    \begin{cases}
        (\Box I_{2\times2} -  \mathfrak{P}(x))\overrightarrow{U}(t,x)  = {\delta}(t,x)\overrightarrow{\mathscr{A}},&\quad(t,x)\in \mathbb{R} \times\mathbb{R}^3, \\
       \overrightarrow {U}(t,x)  = \overrightarrow{0}, &\quad (t,x)\in (-\infty, 0)\times\mathbb{R}^3, 
    \end{cases}
\end{align}
where $I_{2\times2}$ stands for an identity matrix of order $2\times2$ and $\overrightarrow{0}:=\begin{bmatrix}
    0\\0
\end{bmatrix}$. 
From a mathematical perspective, the operator introduced in \eqref{coupled equation} can be viewed as a perturbation of the classical D'Alembert operator, usually denoted by $\Box := \partial_t^2 - \Delta_x$. This operator is fundamental to the study of wave propagation, naturally appearing in a class of hyperbolic partial differential equations (PDEs) and reflecting fundamental physical principles, including causality and a finite propagation speed.

 The current manuscript aims to consider several unique determination results related to the inverse problem of recovering 
 the  potential matrix
$\mathfrak{P}$ from the given information (to be specified below) of the solution to \eqref{coupled equation}. Before proceeding further to the formulation of the inverse problem, we first mention a result related to the direct problem associate  to  the
  point-source problem described by Equation 
\eqref{system of IVP}. In particular, we  show (see section \ref{well-posedness} for more details) that Equation \eqref{system of IVP} admits   a unique solution  $\overrightarrow{U}$,  which can be expressed as follows
\begin{align}\label{Expression for u vector}
    \overrightarrow{U}(t,x) = \frac{\delta(t-|x|)}{4\pi|x|}\overrightarrow{\mathscr{A}} + \mathscr{H}(t-|x|)\overrightarrow{R}(t,x),
\end{align}
where $\mathscr{H}$ denotes the Heaviside distribution, and the second term on the right-hand side of  \eqref{Expression for u vector}, will survive in space-time region given by $\{(t,x)\in\mathbb{R}^3\times\mathbb{R}:\ t>|x|\}$, and in this region $\overrightarrow{R}:=\begin{bmatrix}
    R_1\\
    R_2
\end{bmatrix}$  is a solution to the following  system of boundary value problem (BVP)  
\begin{align}\label{Goursat problem}
\begin{aligned}
\begin{cases}
   \left(\Box I_{2\times2} -  \mathfrak{P}(x)\right)\overrightarrow{R}(t,x) = \overrightarrow{0},\quad  x\in \mathbb{R}^3, t> |x|, \\
    \overrightarrow{R}(x,|x|) =   \begin{bmatrix}
       \frac{1}{8\pi}\int_0^1 (\beta_{11} + \beta_{12})(sx) ds\\
      \frac{1}{8\pi}  \int_0^1 (\beta_{21} + \beta_{22})(sx) ds
    \end{bmatrix}.
    \end{cases} 
    \end{aligned} 
\end{align} 
 The above BVP is known as the Goursat problem in the literature (see \cite{Rakesh2008,blaasten2017well,w1} and references therein). Following the techniques used in \cite{blaasten2017well,Friedlander_1975}, we establish the well-posedness of the BVP given by Equation \eqref{Goursat problem} in section \ref{well-posedness}.

In the present article, we
study the inverse problems of determining  the matrix-potential $\mathfrak{P}(x)$, from   $\overrightarrow{U}(t,a)$ where  $a\in \mathbb{R}^3$ is a fixed point and   $0\leq t\leq T$, for some finite time  $T>0$ to be specified later where $\overrightarrow{U}$ is a solution the IVP governed by \eqref{coupled equation}. If  $a=O$,  then the inverse problem under consideration is known as the {\it coincident} source-receiver case, while $a\neq O$ corresponds to the {\it non-coincidence or separated}  source-receiver case. We refer to \cite{Rakesh2008,vashisth2025unique,w1} for more details on it. 
The inverse problem for determining the matrix potential $\mathfrak{P}(x)$ poses significant challenges because of the fact that the measured data depends on one dimension while the unknown coefficient $\mathfrak{P}(x)$ is a function of three spatial variables. Therefore, it is natural to expect some structural assumptions on the components of the matrix potential $\mathfrak{P}(x)$ in order to establish the unique recovery. In this article, we established the unique determination of $\mathfrak{P}(x)$ under the following assumptions on coefficients: (i) when the coefficients are comparable in the sense $\beta^{(1)}_{ij}(x) \geq \beta^{(2)}_{ij}(x) $, $1\leq i,j\leq 2$ or (ii) when the coefficients possess a certain symmetry, which is radial for the coincident case and an ellipsoidal symmetry for the separated source-receiver pair case. Please refer to Theorems \ref{thm1}, \ref{thm2}, \ref{thm3} and \ref{thm4} for more details. 
\subsection{Physical significance}
Waves occur widely in nature, and they are fundamental to many phenomena, including light, sound, earthquakes, fluid surface waves, electromagnetic radiation, and many others. Inverse problems associated with the wave equation, which is part of the class of hyperbolic PDEs, have been widely investigated in the literature due to their relevance to various physical applications, such as geophysics, medical imaging, and non-destructive testing. In these settings, waves are emitted into a medium, and the response is measured to infer internal properties of the medium, such as density or stiffness parameters. One classical configuration involves using a point-source to generate the wave and recording its response at a point receiver, either at the same location or at a distinct location. Various mathematical models are devised for it. Here we consider a particular mathematical model inspired by its applications in geophysics; see \cite{symes2009seismic} and references therein for more details. In this setting, we consider the coupled wave system, a simplified mathematical model for the propagation of interacting wave modes in a heterogeneous medium. The diagonal coefficients represent intrinsic medium properties that affect each wave component individually, while the off-diagonal terms account for coupling between the components. Such coupling naturally arises in various applications, such as seismic wave propagation, where different wave modes interact due to anisotropy, layering, or elastic heterogeneities in the subsurface. A point-source represents a controlled excitation, such as an explosion or an acoustic pulse, that simultaneously generates multiple wave components. This model keeps the main physical effects while still being easy to analyze for inverse problems.
\subsection{Related articles}
 We briefly mention some related works on the problem studied in the present article.  The inverse problem related to determining the $2\times 2$ matrix coefficients appears in a system of first-order hyperbolic PDEs from one-dimensional data was studied by Bube and Burridge (see \cite{bube1983one}). They proved that the reconstruction of the coefficients can be closely related to the Cholesky factorization of specific matrices derived using the measured data, both in the continuum setting and in the associated discrete formulation.  In \cite{sacks1996impedance}, Rakesh and Sacks investigate the inverse problem for recovering the unknown impedance coefficient associated with a one-dimensional second-order hyperbolic equation using the transmission data measured at a specific depth over a finite time interval. Romanov in \cite{Romanov1992} addressed the identification of damping and potential coefficients that remain constant outside a bounded, simply connected region in $\mathbb{R}^3$, and approached the problem via a reduction to integral geometry. In \cite{Rakesh2008}, Rakesh considered the inverse problem for the aspects of uniqueness where the source and receiver coincide, focusing on radially symmetric or comparable coefficients. In \cite{w1}, Vashisth considers the same problem for source and receiver data located at distinct points. Extensions to more general settings were considered in \cite{RakeshSacks2011, uhlmann2014point}, where angular control conditions on the coefficients were introduced and considered the inverse problem for the determination of the radially symmetric potential uniquely,
when receiver data at the whole boundary of the unit disk in $\mathbb{R}^3$. In \cite{Rakesh1998, Stefanov1990, uhlmann2014point}, authors explored inverse back-scattering problems under varying data configurations. We also refer to \cite{rakesh2010stability}, where Rakesh and Sacks study a system of hyperbolic PDEs with two distinct propagation speeds and diagonal damping and potential matrices, and address the inverse problem of stable recovery of these matrices from initial data and the impulse source at the boundary.  In \cite{blaasten2017well}, Bl\aa{}sten analyzed the well-posedness of both the point-source and Goursat problems. Additionally, they studied the inverse back-scattering problem associated with the point sources, focusing on stability estimates under the assumption of angularly controlled potentials.
Furthermore, studies in \cite{KRISHNAN2023622} aim to determine the time-dependent lower-order coefficients associated with the three-dimensional wave operator from the knowledge of point-source measurements, with an emphasis on stability analysis. Inverse problems related to the hyperbolic equation are widely studied, particularly for the context of determining medium properties using point-source data, see \cite{rakesh1993inverse,  rakesh2003inverse,klibanov2005some, li2006estimation,romanov2013integral,  vashisth2025unique} and references therein. Determination of the potential associated with the wave equation using boundary measurements has been investigated from various perspectives; see \cite{ kian2017unique, eskin1997inverse, avdonin1992boundary, bellassoued2019stability} and for the matrix-valued potential, see \cite{Mishra10122021, ben2025stable, Kumar2024StableDO, khanfer2019inverse,filippas2025recoverymatrixvaluedpotential} and references therein. Motivated by these works, our aim is to extend the works \cite{w1,Rakesh2008} in the context of the uniqueness of matrix potential from the point-source problem \eqref{system of IVP}.

\subsection{Main results} \label{Subsec: main result}
As mentioned before, our aim in this article is to study inverse problems for the unique recovery of the coefficients appearing in a coupled system of wave equations from information about the solutions measured at a fixed point over a finite time interval. We establish uniqueness results for the aforementioned inverse problems, either when the coefficients are comparable (see below, Theorems \ref{thm1} and \ref{thm3}) or when they belong to the following admissible sets of matrix-valued potentials. 
\begin{align*}
\mathcal{A}_1
&:=\left\{\mathfrak{P}(x)=\begin{bmatrix}
       \beta_{11}(x) &  \beta_{12}(x)\\
       \beta_{21}(x) &  \beta_{22}(x)
    \end{bmatrix} \;:\; \beta_{11}(x)=\beta_{12}(x)=\beta_{21}(x)=\beta_{22}(x) \right\},\\
\mathcal{A}_2
&:= \left\{
\mathfrak{P}(x)=
\begin{bmatrix}
\beta(x) & \beta_{12}(x)\\
\beta_{21}(x) & \beta(x)
\end{bmatrix}
\;:\;
\beta_{12},\,\beta_{21}\ \text{are prescribed, while } \beta  \ \text{is unknown}
\right\},\\
\mathcal{A}_3
&:= \left\{
\mathfrak{P}(x)=
\begin{bmatrix}
\beta_{11}(x) & \beta(x)\\
\beta(x) & \beta_{22}(x)
\end{bmatrix}
\;:\;
\beta_{11},\,\beta_{22}\ \text{are prescribed, while } \beta \ \text{is unknown}
\right\}.
\end{align*}
For each of the admissible classes $\mathcal{A}_p$, $p=1,2,3$, mentioned above, the uniqueness results are established 
under some additional assumptions on the coefficients which is radial symmetry for the coincident case (see Theorem \ref{thm2} below) and 
an ellipsoidal symmetry when data is given by a separated source-receiver pair (see Theorem \ref{thm4} below). More precisely, Theorems \ref{thm1}-\ref{thm4}, stated below, are the main results of the present article.
\begin{theorem}\label{thm1} Let $\overrightarrow{\mathscr{A}}$ be as in Equation \eqref{notations}.
For $k=1,2$,  suppose $\mathfrak{P}^{(k)}$ be  $2\times2$ matrix-valued functions on $\mathbb{R}^3$ having entries
$\beta^{(k)}_{ij}\in C^{\infty}(\mathbb{R}^3)$ for $1\le i,j\le 2$ and $\overrightarrow{U}^{(k)}$ solve the following  IVP 
  \begin{align}\label{e1}
     \begin{cases}
      \left(\Box I_{2\times2} -  \mathfrak{P}^{(k)}(x)\right)\overrightarrow{U}^{(k)}(t,x)  = {\delta}(t,x)\overrightarrow{\mathscr{A}},\quad&(t,x)\in \mathbb{R} \times\mathbb{R}^3, \\
       \overrightarrow{U}^{(k)}(t,x)  = \overrightarrow{0},  &(t,x)\in (-\infty, 0)\times\mathbb{R}^3.
  \end{cases}
  \end{align}
Now if 
$\beta^{(1)}_{ij}(x)\ge \beta^{(2)}_{ij}(x),$ for each  $x\in\mathbb{R}^3,\; 1\le i,j\le 2$
and 
$\overrightarrow{U}^{(1)}(t,0)=\overrightarrow{U}^{(2)}(t,0),$ for each $t\in(0,T]$, then 
\begin{align*}
    \mathfrak{P}^{(1)}(x)=\mathfrak{P}^{(2)}(x), \quad \, \mbox{for each}\  x\in\mathbb{R}^3 \ \text{such that}\ |x|\le \frac{T}{2}.
  \end{align*}
\end{theorem}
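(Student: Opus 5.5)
Following Rakesh's scalar argument in \cite{Rakesh2008}, we plan to reduce the statement to an integral identity for the difference of the two solutions and then use positivity. Write $\overrightarrow{U}^{(k)}=\frac{\delta(t-|x|)}{4\pi|x|}\overrightarrow{\mathscr{A}}+\mathscr{H}(t-|x|)\overrightarrow{R}^{(k)}$ as in \eqref{Expression for u vector}. Then $\overrightarrow{W}:=\overrightarrow{U}^{(1)}-\overrightarrow{U}^{(2)}=\mathscr{H}(t-|x|)(\overrightarrow{R}^{(1)}-\overrightarrow{R}^{(2)})$ has no singular part and solves
\begin{align*}
(\Box I_{2\times2}-\mathfrak{P}^{(1)})\overrightarrow{W}=(\mathfrak{P}^{(1)}-\mathfrak{P}^{(2)})\overrightarrow{U}^{(2)},\qquad \overrightarrow{W}=\overrightarrow{0}\ \text{for } t<0.
\end{align*}
To extract $\overrightarrow{W}(t,0)$ we pair against an adjoint solution. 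Let $\overrightarrow{V}^{(j)}$, $j=1,2$, be the solution of the backward (time-reversed) point-source problem for the transposed operator $\Box I_{2\times2}-(\mathfrak{P}^{(1)})^{T}$ with source $\delta(t-\tau,x)e_j$, vanishing for $t>\tau$. By the well-posedness results of section \ref{well-posedness}, which apply equally to the transposed potential and to a general vector $\overrightarrow{\mathscr{A}}$, it has the form $\frac{\delta(\tau-t-|x|)}{4\pi|x|}e_j+\mathscr{H}(\tau-t-|x|)\overrightarrow{S}^{(j)}$. Green's identity gives
\begin{align*}
W_j(\tau,0)=\int\!\!\int \overrightarrow{V}^{(j)}(t,x)\cdot(\mathfrak{P}^{(1)}-\mathfrak{P}^{(2)})(x)\,\overrightarrow{U}^{(2)}(t,x)\,dx\,dt.
\end{align*}
The integrand is supported in $\{|x|\le t\le\tau-|x|\}$, so only $|x|\le\tau/2$ matters. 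The most singular contribution is the product of the two delta parts, $\delta(t-|x|)\delta(\tau-t-|x|)/(16\pi^2|x|^2)$. Integrating in $t$, this gives
\begin{align*}
\frac{1}{16\pi^2}\int_{|x|=\tau/2}\frac{\big((\mathfrak{P}^{(1)}-\mathfrak{P}^{(2)})\overrightarrow{\mathscr{A}}\big)_j}{|x|^2}\,dS_x=\frac{1}{4\pi}\int_{S^2}\sum_{i}\big(\beta^{(1)}_{ji}-\beta^{(2)}_{ji}\big)\big(\tfrac{\tau}{2}\omega\big)\,d\omega .
\end{align*}
The remaining terms are volume integrals over $|x|\le\tau/2$ of $(\mathfrak{P}^{(1)}-\mathfrak{P}^{(2)})$ against the smooth kernels $\overrightarrow{R}^{(2)}$ and $\overrightarrow{S}^{(j)}$.

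Set $m_j(r):=\int_{S^2}\sum_i(\beta^{(1)}_{ji}-\beta^{(2)}_{ji})(r\omega)\,d\omega$. Because of the comparability hypothesis, $m_j\ge0$, and the pointwise difference satisfies $0\le\beta^{(1)}_{ji}-\beta^{(2)}_{ji}$. Taking $\overrightarrow{W}(\tau,0)=0$ for $\tau\le T$ and using the local boundedness of $\overrightarrow{R}^{(2)}$ and $\overrightarrow{S}^{(j)}$ on the compact set $|x|\le T/2$, $0\le t\le T$, we obtain
\begin{align*}
m_j(\tau/2)\le C\sum_{i,k}\int_{|x|\le\tau/2}\big(\beta^{(1)}_{ki}-\beta^{(2)}_{ki}\big)(x)\,dx\le C\int_0^{\tau/2}r^2\big(m_1(r)+m_2(r)\big)\,dr .
\end{align*}
The second inequality uses nonnegativity: the integral of a nonnegative function is controlled by its spherical means. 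Gronwall's inequality then gives $m_1=m_2=0$ on $[0,T/2]$. Since each summand $\beta^{(1)}_{ji}-\beta^{(2)}_{ji}$ is nonnegative and continuous, each one vanishes on $|x|\le T/2$, which is the claim.

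The main obstacle is making the Green identity rigorous for these distributional solutions. In particular, two points need care: the product of the two delta-type singular parts must be justified, which is done by its transversal intersection on the sphere $|x|=\tau/2$ in space-time, and the boundedness of $\overrightarrow{R}^{(2)}$ and $\overrightarrow{S}^{(j)}$ must be uniform up to the characteristic cones. I would handle both by writing the identity directly in terms of the Goursat data \eqref{Goursat problem}. Concretely, I would split the region $\{|x|\le t\le\tau-|x|\}$ and integrate by parts, obtaining boundary terms on the two cones, as in \cite{Rakesh2008}. The bounds then follow from the smooth well-posedness of the Goursat problem established in section \ref{well-posedness}.
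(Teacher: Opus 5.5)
Your proposal is correct and follows essentially the paper's route. You pair the difference equation with an adjoint point-source solution for $\Box I_{2\times2}-[\mathfrak{P}^{(1)}]^{t}$, isolate the delta--delta term as a spherical mean of the coefficient difference on $|x|=\tau/2$ (the paper's $|x|=\tau$ with data at time $2\tau$), bound the remaining terms using the sign condition, and close with Gr\"onwall. The only cosmetic difference is that you use two adjoint sources $e_1,e_2$, whereas the paper uses the single source $\overrightarrow{\mathscr{A}}$, which needs only $U_1+U_2$ at the origin and directly produces $\sum_{i,j}\beta_{ij}$.

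One small slip: after integrating out $t$, the two mixed delta--remainder terms carry the weight $\frac{1}{4\pi|x|}$. Your volume bound should therefore be $C\int_0^{\tau/2} r\,(m_1+m_2)(r)\,dr$ rather than the version with $r^2$. The constant in front of the spherical mean is also off. Neither affects the Gr\"onwall step.
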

    
\begin{theorem}\label{thm2}
 Let $\overrightarrow{\mathscr{A}}$ be as in Equation \eqref{notations}. For  $k=1,2$, let $\mathfrak{P}^{(k)}\in \mathcal{A}_p$, for any $p=1,2,3$, having entries 
$\beta^{(k)}_{ij}\in C^{\infty}(\mathbb{R}^3)$ for $1\leq i,j\leq 2$ and $\overrightarrow{U}^{(k)}$ solve the following IVP 
  \begin{align}\label{e2}
       \begin{cases}
      \left(\Box I_{2\times2} -  \mathfrak{P}^{(k)}(x)\right)\overrightarrow{U}^{(k)}(t,x)  = \delta(t,x)\overrightarrow{\mathscr{A}} ,\quad&(t,x)\in \mathbb{R} \times\mathbb{R}^3, \\
       \overrightarrow{U}^{(k)}(t,x)  = \overrightarrow{0},  &(t,x)\in (-\infty, 0)\times\mathbb{R}^3. 
  \end{cases} 
  \end{align}
  Assume further that $\beta^{(k)}_{ij}(x)$ = $b^{(k)}_{ij}(|x|)$, for each $x\in \mathbb{R}^3$, $1\leq i,j\leq 2$ and $k=1,2$. Now if 
$\overrightarrow{U}^{(1)}(t,0)=\overrightarrow{U}^{(2)}(t,0),$ for each $t\in(0,T]$, then 
\begin{align*}
    \mathfrak{P}^{(1)}(x)=\mathfrak{P}^{(2)}(x), \quad \mbox{for each} \  x\in\mathbb{R}^3 \ \text{such that}\ |x|\le \frac{T}{2}.
  \end{align*}
\end{theorem}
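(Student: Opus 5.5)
The plan is the usual Rakesh-type reduction: the data identity becomes a Volterra integral equation of the second kind for a single scalar function. Radial symmetry is what turns the leading singular term into a point evaluation. The class $\mathcal{A}_p$ is what makes the unknown difference a multiple of one scalar function, so the resulting inequality closes.

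\emph{Step 1 (structure of the difference).} Set $D(x):=\mathfrak{P}^{(1)}(x)-\mathfrak{P}^{(2)}(x)=D(|x|)$. In each admissible class, $D$ is governed by a single radial scalar $d(r)$:
\begin{align*}
\mathcal{A}_1:\ D=d\begin{bmatrix}1&1\\1&1\end{bmatrix},\qquad
\mathcal{A}_2:\ D=d\, I_{2\times2},\qquad
\mathcal{A}_3:\ D=d\begin{bmatrix}0&1\\1&0\end{bmatrix}.
\end{align*}
In every case $D\overrightarrow{\mathscr{A}}=c_p\, d\,\overrightarrow{\mathscr{A}}$ with $c_p\in\{1,2\}$, and $|D(r)|\le C|d(r)|$. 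It therefore suffices to show $d\equiv 0$ on $[0,T/2]$.

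\emph{Step 2 (integral identity).} Let $\overrightarrow{W}:=\overrightarrow{U}^{(1)}-\overrightarrow{U}^{(2)}$. It satisfies
\[
(\Box I_{2\times2}-\mathfrak{P}^{(1)})\overrightarrow{W}=D\,\overrightarrow{U}^{(2)},
\]
with $\overrightarrow{W}$ vanishing for $t<0$. I would represent $\overrightarrow{W}(t,0)$ through the matrix fundamental solution of $\Box I_{2\times2}-(\mathfrak{P}^{(1)})^{T}$ with pole at the origin. Its columns come from the well-posedness results of section \ref{well-posedness}, applied with sources $\delta e_1$ and $\delta e_2$ in place of $\delta\overrightarrow{\mathscr{A}}$. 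By time-translation and symmetry of the wave operator, this fundamental solution has the form
\[
G(s,y)=\frac{\delta(s-|y|)}{4\pi|y|}I_{2\times2}+\mathscr{H}(s-|y|)\,\widetilde{R}(s,y),
\]
with $\widetilde{R}$ smooth on $\{s\ge|y|\}$. This yields
\[
0=\overrightarrow{W}(t,0)=\int\!\!\int G(t-s,y)^{T} D(y)\,\overrightarrow{U}^{(2)}(s,y)\,ds\,dy .
\]

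\emph{Step 3 (expanding the identity).} Insert \eqref{Expression for u vector} for $\overrightarrow{U}^{(2)}$ and the form of $G$.
\begin{itemize}
\item The $\delta\cdot\delta$ term is $\int\frac{\delta(t-2|y|)}{16\pi^2|y|^2}D(y)\overrightarrow{\mathscr{A}}\,dy$. Using radial symmetry and polar coordinates, it equals $\frac{1}{8\pi}c_p\,d(t/2)\,\overrightarrow{\mathscr{A}}$.
\item The two cross terms ($\delta\cdot\mathscr{H}$) reduce to integrals over the ellipsoids/balls $|y|\le t/2$, of the form $\int_{|y|\le t/2}\frac{K(t,y)}{|y|}D(y)(\cdots)\,dy$ with bounded $K$.
\item The $\mathscr{H}\cdot\mathscr{H}$ term is $\int_{|y|\le t/2}\int_{|y|}^{t-|y|}\widetilde{R}^{T}D\,R^{(2)}\,ds\,dy$.
\end{itemize}
The smooth terms involve $D$ acting on $\overrightarrow{R}^{(2)}$, which is not parallel to $\overrightarrow{\mathscr{A}}$. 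This is exactly why Step 1 is needed: we can still bound them by $|D(r)|\le C|d(r)|$. Since $R^{(2)}$ and $\widetilde{R}$ are bounded on compact sets by the well-posedness result, taking a fixed component and polar coordinates gives, for $0<t\le T$,
\[
|d(t/2)|\le C\int_0^{t/2}\big(r+r^2\big)|d(r)|\,dr .
\]

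\emph{Step 4 (conclusion).} Setting $\tau=t/2$, Gronwall's inequality gives $d\equiv 0$ on $[0,T/2]$, hence $\mathfrak{P}^{(1)}=\mathfrak{P}^{(2)}$ for $|x|\le T/2$.

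The main obstacle is Step 2–3. One must justify the representation formula and the product of the two $\delta$-singularities supported on cones, which I would handle by regularizing the source and passing to the limit. One must also verify that the cross terms carry only an integrable singularity $1/|y|$ rather than something worse, so that the bound in Step 3 really is of Volterra type.
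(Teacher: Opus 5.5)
Your proposal is correct and takes essentially the paper's route. You derive a duality identity with the adjoint fundamental solution, split it into the $\delta\cdot\delta$, cross and $\mathscr{H}\cdot\mathscr{H}$ terms, use radial symmetry to turn the leading term into a multiple of $d(t/2)$, and close with Gr\"onwall. The only difference is that you pair with the matrix fundamental solution driven by $\delta\, I_{2\times2}$ and read off one component, whereas the paper pairs with the adjoint solution driven by $\delta\overrightarrow{\mathscr{A}}$, so it only uses $U_1+U_2$ at the origin. Your Volterra inequality, with lower limit $0$ and explicit weights $r+r^2$, is the correct form of the paper's estimate, which is printed with a lower limit of $1$.
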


\begin{theorem}\label{thm3}
 Let $\overrightarrow{\mathscr{A}}$ be as in Equation \eqref{notations}.
For $k=1,2$,  suppose $\mathfrak{P}^{(k)}$ be  $2\times2$ matrix-valued functions on $\mathbb{R}^3$ having entries
$\beta^{(k)}_{ij}\in C^{\infty}(\mathbb{R}^3)$ for $1\le i,j\le 2$ and $\overrightarrow{U}^{(k)}$ solve the following  IVP  
\begin{align}\label{e3}
\begin{cases}
\left(\Box I_{2\times2}-\mathfrak{P}^{(k)}(x)\right)\overrightarrow{U}^{(k)}(t,x)
= {\delta}(t,x)\overrightarrow{\mathscr{A}}, \quad &(t,x)\in \mathbb{R}\times\mathbb{R}^3,\\[2mm]
\overrightarrow{U}^{(k)}(t,x)=\overrightarrow{0},  &(t,x)\in (-\infty, 0)\times\mathbb{R}^3.
\end{cases}
\end{align}
Now if 
$\beta^{(1)}_{ij}(x)\ge \beta^{(2)}_{ij}(x),$ for each  $x\in\mathbb{R}^3,\; 1\le i,j\le 2$, and 
$\overrightarrow{U}^{(1)}(t,e)=\overrightarrow{U}^{(2)}(t,e),$ for each $t\in(0,T],$ where $T>1$ and $e = (1,0,0)$ be the fixed unit vector, then 
\begin{align*}
    \mathfrak{P}^{(1)}(x)=\mathfrak{P}^{(2)}(x), \quad \mbox{for each} \ \, x\in\mathbb{R}^3 \ \text{such that}\ \ |x|+|x-e|\le T.
\end{align*}
\end{theorem}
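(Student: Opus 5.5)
We follow the Rakesh/Vashisth strategy: derive an integral identity linking the difference of the coefficients to the difference of the data at $e$, and use the sign hypothesis to force the difference to vanish.

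\textbf{Step 1: reduction to an equation for the difference.} Set $\overrightarrow{W}:=\overrightarrow{U}^{(1)}-\overrightarrow{U}^{(2)}$ and $Q:=\mathfrak{P}^{(1)}-\mathfrak{P}^{(2)}$, which has nonnegative entries. The delta singularities in \eqref{Expression for u vector} cancel, so
\[
\overrightarrow{W}=\mathscr{H}(t-|x|)\bigl(\overrightarrow{R}^{(1)}-\overrightarrow{R}^{(2)}\bigr),
\]
and $\overrightarrow{W}$ solves
\[
\left(\Box I_{2\times2}-\mathfrak{P}^{(1)}\right)\overrightarrow{W}=Q\,\overrightarrow{U}^{(2)},\qquad \overrightarrow{W}=\overrightarrow{0}\ \text{ for } t<0.
\]

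\textbf{Step 2: duality with a backward solution from $e$.} Introduce the solution $\overrightarrow{V}$ of the adjoint system
\[
\left(\Box I_{2\times2}-(\mathfrak{P}^{(1)})^{T}\right)\overrightarrow{V}=\delta(t-\tau,x-e)\,\overrightarrow{c},
\]
vanishing for $t>\tau$, with $\tau\le T$ and $\overrightarrow{c}\in\{(1,0)^T,(0,1)^T\}$. By the adjoint analogue of the well-posedness result of Section \ref{well-posedness}, it has the form
\[
\overrightarrow{V}=\frac{\delta(\tau-t-|x-e|)}{4\pi|x-e|}\,\overrightarrow{c}+\mathscr{H}(\tau-t-|x-e|)\,\overrightarrow{S}(t,x).
\]
Pairing the equation for $\overrightarrow{W}$ with $\overrightarrow{V}$ gives
\[
\overrightarrow{c}\cdot\overrightarrow{W}(\tau,e)=\iint \overrightarrow{V}\cdot Q\,\overrightarrow{U}^{(2)}\,dx\,dt .
\]
The hypothesis makes the left side zero for all $\tau\le T$. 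Expanding the product of the leading singular parts, $\delta(t-|x|)\,\delta(\tau-t-|x-e|)$, produces an integral of $\frac{\overrightarrow{c}\cdot Q(x)\overrightarrow{\mathscr{A}}}{16\pi^2|x||x-e|}$ over the ellipsoid $\{|x|+|x-e|=\tau\}$. The remaining terms are integrals of $Q$ against $R^{(2)}$, $S$ and products of Heaviside factors over the solid ellipsoid $\{|x|+|x-e|\le\tau\}$, with kernels bounded on compact sets.

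\textbf{Step 3: Volterra-type estimate and Gronwall.} Write
\[
q(x):=\sum_{i,j}\bigl(\beta^{(1)}_{ij}-\beta^{(2)}_{ij}\bigr)(x)\ge 0 .
\]
Taking $\overrightarrow{c}$ to be each unit vector and summing, the surface term controls
\[
\int_{|x|+|x-e|=\tau}\frac{\sum_{i,j}Q_{ij}}{|x||x-e|}\,d\sigma ,
\]
since $Q\overrightarrow{\mathscr{A}}$ gives the row sums and all entries are nonnegative. Nonnegativity of the entries is exactly what converts the identity into an inequality. Set
\[
F(\tau):=\int_{|x|+|x-e|=\tau}q\,d\sigma\quad\text{(weighted)},
\]
so that $F(\tau)\le C\int_1^\tau F(s)\,ds$. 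Here we use $q\ge0$ to bound the volume integral by the integral over spheroidal shells, via the coarea formula for $\phi(x)=|x|+|x-e|$, whose gradient is bounded below away from the segment $[0,e]$. Gronwall then gives $F\equiv0$ on $[1,T]$, hence $q=0$ on $\{|x|+|x-e|\le T\}$. Since each summand is nonnegative, every entry of $Q$ vanishes there.

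\textbf{Main obstacle.} The hard step is Step 2. We need a rigorous justification of the duality identity with two distributional solutions whose singular supports (the forward cone from $O$ and the backward cone from $(\tau,e)$) intersect. We also need uniform bounds on $\overrightarrow{R}^{(2)}$ and $\overrightarrow{S}$ on the relevant compact set, obtained from Goursat well-posedness, so that the lower-order terms are genuinely dominated by $\int^\tau F$.

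I would try first to avoid the delta-times-delta product. Following \cite{w1}, integrate in time: consider $\int_0^\tau \overrightarrow{W}(s,e)\,ds$, or use a source $\mathscr{H}$ instead of $\delta$ in the dual problem, so that only one delta factor appears. Then differentiate in $\tau$ at the end.

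The degenerate segment $[0,e]$ is where $\phi$ has small gradient, namely $\tau$ near $1$. It has measure zero, and the Volterra inequality should start at $\tau=1$ with $F$ integrable there, which needs a brief check.
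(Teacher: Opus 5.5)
Your proposal is correct and follows essentially the paper's argument: your backward adjoint solution from $(\tau,e)$ tested against unit vectors $\overrightarrow{c}$ is the paper's time-reflected $\overrightarrow{W}(2\tau-t,x)$ with source $\overrightarrow{\mathscr{A}}$, and your coarea step plays the role of the paper's explicit prolate-spheroidal computation, before the same nonnegativity-plus-Gr\"onwall conclusion. The worry about $|\nabla\phi|$ degenerating near $[0,e]$ goes away if you keep the weight $1/(|x||x-e||\nabla\phi|)$ produced by $\delta(\tau-\phi)$: the coarea formula then gives $\int_{\phi\le\tau}\frac{q}{|x||x-e|}\,dx=\int_1^{\tau}F(s)\,ds$ exactly, and in the coordinates of Lemma \ref{lem3.1} (polar angle renamed $\varphi$) one gets $F(s)=\frac12\int_0^{\pi}\int_0^{2\pi}q\,\sin\varphi\,d\theta\,d\varphi$, which is continuous up to $s=1$.
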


\begin{theorem}\label{thm4}
  Let $\overrightarrow{\mathscr{A}}$ be as in Equation \eqref{notations}. For  $k=1,2$, let $\mathfrak{P}^{(k)}\in \mathcal{A}_p$, for any $p=1,2,3$, having entries 
$\beta^{(k)}_{ij}\in C^{\infty}(\mathbb{R}^3)$ for $1\leq i,j\leq 2$ and $\overrightarrow{U}^{(k)}$ solve the following IVP
   \begin{align}\label{e4}
  \begin{cases}
      \left(\Box I_{2\times2} -  \mathfrak{P}^{(k)}(x)\right)\overrightarrow{U}^{(k)}(t,x)  = {\delta}(t,x)\overrightarrow{\mathscr{A}},\quad&(t,x)\in \mathbb{R} \times\mathbb{R}^3, \\
       \overrightarrow{U}^{(k)}(t,x)  = \overrightarrow{0},  &(t,x)\in (-\infty, 0)\times\mathbb{R}^3. 
  \end{cases}
  \end{align}
 Assume further that $\beta^{(k)}_{ij}(x)$ = $b^{(k)}_{ij}(|x|+|x-e|)$ for each $x\in \mathbb{R}^3$, $1\leq i,j\leq 2$, and $k=1,2$. Now if 
$\overrightarrow{U}^{(1)}(t,e)=\overrightarrow{U}^{(2)}(t,e),$ for each $t\in(0,T],$ where $T>1$ and $e = (1,0,0)$ be the fixed unit vector, then 
\begin{align*}
    \mathfrak{P}^{(1)}(x)=\mathfrak{P}^{(2)}(x), \quad \mbox{for each}
     \, \  x\in\mathbb{R}^3 \ \text{such that} \ \ |x|+|x-e|\le T.
\end{align*}
\end{theorem}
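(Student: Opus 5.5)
We prove Theorem \ref{thm4} by following the separated source--receiver strategy of \cite{w1}, adapted to the coupled system. Throughout, we use the representation \eqref{Expression for u vector} and the well-posedness of the Goursat problem \eqref{Goursat problem} from Section \ref{well-posedness}.

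\emph{Reduction to an integral identity.} Set $\overrightarrow{W}:=\overrightarrow{U}^{(1)}-\overrightarrow{U}^{(2)}$. The leading singular terms $\frac{\delta(t-|x|)}{4\pi|x|}\overrightarrow{\mathscr{A}}$ cancel, so $\overrightarrow{W}=\mathscr{H}(t-|x|)(\overrightarrow{R}^{(1)}-\overrightarrow{R}^{(2)})$, and it solves
\begin{align*}
(\Box I_{2\times2}-\mathfrak{P}^{(1)})\overrightarrow{W}=(\mathfrak{P}^{(1)}-\mathfrak{P}^{(2)})\overrightarrow{U}^{(2)}
\end{align*}
with zero past. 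We introduce the backward (adjoint) fundamental solution $\overrightarrow{V}$ of $(\Box I-(\mathfrak{P}^{(1)})^{T})$, with point source at $(\tau,e)$, which vanishes for $t>\tau$. By time reversal this is again of the form \eqref{Expression for u vector}, centred at $e$: a $\delta(\tau-t-|x-e|)/(4\pi|x-e|)$ term plus $\mathscr{H}(\tau-t-|x-e|)$ times a smooth remainder. Here we must allow a matrix-valued (or two-column) fundamental solution, so that each component of $\overrightarrow{W}(\tau,e)$ can be isolated. Green's identity then gives, for every $\tau\in(0,T]$ and each component $m$,
\begin{align*}
0=W_m(\tau,e)=\int\!\!\int \overrightarrow{V}_m(t,x;\tau)^{T}\,(\mathfrak{P}^{(1)}-\mathfrak{P}^{(2)})(x)\,\overrightarrow{U}^{(2)}(t,x)\,dx\,dt .
\end{align*}
Inserting the two-term expansions, the product of the two delta terms yields the principal part
\begin{align*}
\frac{1}{16\pi^2}\int_{|x|+|x-e|=\tau}\frac{\big((\mathfrak{P}^{(1)}-\mathfrak{P}^{(2)})\overrightarrow{\mathscr{A}}\big)_m(x)}{|x|\,|x-e|\,|\nabla(|x|+|x-e|)|}\,dS(x),
\end{align*}
which is an integral over the ellipsoid $E_\tau=\{|x|+|x-e|=\tau\}$. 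The remaining terms are Volterra-type: they are integrals over the solid ellipsoids $|x|+|x-e|\le\tau$, with kernels bounded by smooth functions.

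\emph{Exploiting the symmetry.} For $\tau>1$, write $f_m(s):=\big((b^{(1)}-b^{(2)})\overrightarrow{\mathscr{A}}\big)_m(s)$. The principal term becomes $c(\tau)f_m(\tau)$, where
\begin{align*}
c(\tau)=\frac{1}{16\pi^2}\int_{E_\tau}\frac{dS}{|x|\,|x-e|\,|\nabla(|x|+|x-e|)|}>0
\end{align*}
depends only on $\tau$. The lower-order terms are of the form $\int_1^\tau K(\tau,s)\cdot(b^{(1)}-b^{(2)})(s)\,ds$ with a locally bounded kernel $K$; these are obtained with the coarea formula on the ellipsoids $E_s$. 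This is where the class $\mathcal{A}_p$ enters, because $(b^{(1)}-b^{(2)})\overrightarrow{\mathscr{A}}$ must determine $b^{(1)}-b^{(2)}$:
\begin{itemize}
\item In $\mathcal{A}_1$, both components equal $2(b^{(1)}-b^{(2)})$.
\item In $\mathcal{A}_2$, they equal $b^{(1)}-b^{(2)}$, since the off-diagonal entries are prescribed.
\item In $\mathcal{A}_3$, they equal the difference of the unknown off-diagonal entry.
\end{itemize}
In every case there is one unknown scalar function $g(s)$, and the identities yield a Volterra equation of the second kind,
\begin{align*}
c(\tau)g(\tau)+\int_1^\tau K(\tau,s)g(s)\,ds=0, \qquad 1<\tau\le T .
\end{align*}
Gronwall's inequality then gives $g\equiv0$ on $[1,T]$. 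Since $|x|+|x-e|\ge1$ always holds, this yields $\mathfrak{P}^{(1)}=\mathfrak{P}^{(2)}$ on $\{|x|+|x-e|\le T\}$.

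\emph{Main obstacle.} The hardest step is making the remainder terms genuinely Volterra with bounded kernels. Two difficulties arise.
\begin{itemize}
\item Cross terms such as $\delta\cdot\mathscr{H}$ produce integrals over ellipsoid surfaces or cones, which must be converted via the coarea formula into $\int_1^\tau(\cdots)\,ds$. The weights $1/|x|$ and $1/|x-e|$, and the degenerate ellipsoid $E_1$ (the segment from $O$ to $e$), need care near $\tau=1$.
\item The remainders $\overrightarrow{R}^{(2)}$ and $\overrightarrow{V}$ must have bounds uniform in $\tau\le T$. These come from the Goursat well-posedness estimates.
\end{itemize}
We also need $c(\tau)$ bounded below on $[1+\epsilon,T]$ and a direct argument as $\tau\downarrow1$. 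Here we would use that the measure of $E_\tau$ shrinks like the weights grow, so $c(\tau)$ stays positive; we would first check this explicitly in prolate spheroidal coordinates.
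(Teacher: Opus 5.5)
Your proposal is correct and follows essentially the paper's route. The adjoint point source at $e$ gives an integral identity whose principal part is an integral over the ellipsoid $|x|+|x-e|=\tau$ (the paper uses $2\tau$) and whose remainder is a weighted integral over the solid ellipsoid; prolate spheroidal coordinates (equivalently, your coarea step) together with the symmetry then reduce this to $|b(\tau)|\le C\int_1^{\tau}|b(r)|\,dr$, and Gr\"onwall finishes, with only cosmetic differences: the paper uses the single adjoint source $\overrightarrow{\mathscr{A}}$ and needs only $U_1(2\tau,e)+U_2(2\tau,e)=0$ (isolating components is unnecessary, since each $\mathcal{A}_p$ has one scalar unknown), and the explicit identity $dS_x/|\tau x-e|x||=\tfrac12\sin\phi\,d\theta\,d\phi$ on $\cosh\rho=\tau$ shows your $c(\tau)$ is the constant $1/(8\pi)$, which settles your concern near $\tau=1$.
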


\noindent Our approach for proving Theorems \ref{thm1}, \ref{thm2}, \ref{thm3}, and \ref{thm4} is based on the construction of the integral identity, which is derived from the solution of the adjoint problem. This is followed by the use of spheroidal and prolate spheroidal coordinates to obtain an integral inequality, which, together with Gr\"onwall's inequality, yields the desired uniqueness result. This work contributes to the understanding of inverse problems with minimal measurement data and highlights conditions under which unique recovery is possible. This work can be considered an extension of prior works \cite{Rakesh2008,w1,blaasten2017well}, which addresses the aforementioned problems for a single wave equation with potential.

\subsection{Organization of the article}
The structure of the remainder of this article is given as follows. In Section~\ref{well-posedness}, we establish the Fundamental solution to the point-source problem and the well-posedness of the Goursat BVP given by Equation \eqref{Goursat problem}. Section \ref{sec_proofs_main_results}, which contains the proofs of the main results of this article, is split into two subsections: \ref{Sec; proofs comparable results} and \ref{Sec; proofs radial results}. In Subsection~\ref{Sec; proofs comparable results}, we present the proofs of Theorems \ref{thm1} and \ref{thm3}, which concern comparable coefficients. The proofs of Theorems \ref{thm2} and \ref{thm4}, which are considered under the symmetry assumptions, are provided in the Subsection~\ref{Sec; proofs radial results}.

\section{Fundamental solution}\label{well-posedness}
This section is devoted to deriving the fundamental solutions of the coupled system of wave equations with a point-source and to proving the well-posedness of the associated Goursat problem. It is organized into two subsections: the first addresses the case of a source at the origin, and the second presents the corresponding solution when the source is located at $e$.
\subsection{The point-source at the origin}
As mentioned above, in this subsection, we derive the fundamental solutions to the coupled system of wave equations with a point-source situated at the origin and prove the well-posedness for the associated Goursat problem.
We start with observing  (see, for instance, \cite{Friedlander_1975}) that the fundamental solution of the wave operator satisfies 
\begin{align}\label{Green's function}
    (\partial_t^2 - \Delta)\frac{\delta(t-|x|)}{4\pi|x|} = \delta(t,x).
\end{align}
Consequently, we have
\begin{align*}
    (\Box I_{2\times2})\frac{\delta(t-|x|)}{4\pi|x|} \overrightarrow{\mathscr{A}}= {\delta}(t,x)\overrightarrow{\mathscr{A}}.
\end{align*}
In the present setting, the governing equation in the point-source problem \eqref{system of IVP} involves a zeroth-order perturbation; therefore, motivated by \cite{Friedlander_1975,Rakesh2008,blaasten2017well}, we look for the  following ansatz for the solution associated with the
operator $\Box I_{2\times2}-\mathfrak{P}(x)$ 
\begin{align}\label{Ex for u vector}
    \overrightarrow{U}(t,x) = \frac{\delta(t-|x|)}{4\pi|x|}\overrightarrow{\mathscr{A}} + \mathscr{H}(t-|x|)\overrightarrow{R}(t,x),
\end{align}
where
\begin{align*}
    \overrightarrow{\mathscr{A}} = 
    \begin{bmatrix}
        1\\1
    \end{bmatrix},\qquad  \overrightarrow{R} = 
    \begin{bmatrix}
        R_1\\R_2
    \end{bmatrix}
\end{align*} and 
$\mathscr{H}$ denote the Heaviside distribution.
Applying  the operator $\Box I_{2\times2}-\mathfrak{P}(x)$ to the ansatz, gives us 
\begin{align}\label{eq:A+B+C+D}
    &\left(\Box I_{2\times2} - \mathfrak{P}(x)\right)\overrightarrow{U}(t,x) = 
    \begin{bmatrix}
        \partial_t^2 - \Delta - \beta_{11}(x) & -\beta_{12}(x) \\
        -\beta_{21}(x) & \partial_t^2 - \Delta - \beta_{22}(x)
    \end{bmatrix}
    \begin{bmatrix}
        U_1(t,x) \\ U_2(t,x)
    \end{bmatrix} \nonumber\\
    &= 
    \begin{bmatrix}
        \partial_t^2 - \Delta - \beta_{11}(x) & -\beta_{12}(x) \\
        -\beta_{21}(x) & \partial_t^2 - \Delta - \beta_{22}(x)
    \end{bmatrix}
    \left\{
        \frac{\delta(t - |x|)}{4\pi|x|}
        \begin{bmatrix}
            1 \\ 1
        \end{bmatrix}
        + \mathscr{H}(t - |x|)
        \begin{bmatrix}
            R_1(t,x) \\ R_2(t,x)
        \end{bmatrix}
    \right\} \nonumber\\
    &=
    \begin{bmatrix}
        \left(\partial_t^2 - \Delta\right)\dfrac{\delta(t - |x|)}{4\pi|x|} \\
        \left(\partial_t^2 - \Delta\right)\dfrac{\delta(t - |x|)}{4\pi|x|}
    \end{bmatrix}
    - 
    \begin{bmatrix}
        \left(\beta_{11}(x) + \beta_{12}(x)\right)\dfrac{\delta(t - |x|)}{4\pi|x|} \\
        \left(\beta_{21}(x) + \beta_{22}(x)\right)\dfrac{\delta(t - |x|)}{4\pi|x|}
    \end{bmatrix} \nonumber\\
    &\quad +
    \begin{bmatrix}
        \left(\partial_t^2 - \Delta\right)\bigl[\mathscr{H}(t - |x|)R_1(t,x)\bigr] \\
        \left(\partial_t^2 - \Delta\right)\bigl[\mathscr{H}(t - |x|)R_2(t,x)\bigr]
    \end{bmatrix}
    -
    \begin{bmatrix}
        \beta_{11}(x) & \beta_{12}(x) \\
        \beta_{21}(x) & \beta_{22}(x)
    \end{bmatrix}
    \begin{bmatrix}
        \mathscr{H}(t - |x|)\, R_1(t,x) \\ 
        \mathscr{H}(t - |x|)\, R_2(t,x)
    \end{bmatrix} \nonumber\\
    & = A + B + C + D.
\end{align}
The term $C$ can be simplified as follows
\begin{align}\label{Term C}
    & \begin{bmatrix}
       \left(\partial_t^2 -\Delta \right)\bigl[ \mathscr{H}(t-|x|)R_1(t,x)\bigr]\\ \left(\partial_t^2 -\Delta \right)\bigl[ \mathscr{H}(t-|x|)R_2(t,x)\bigr]
    \end{bmatrix}\nonumber\\
& \quad\quad= \begin{bmatrix}
    \delta'(t-|x|)(R_1 -R_1) + 2 \dfrac{\delta(t-|x|)}{4\pi|x|} (|x|\partial_t R_1 + R_1 + x\cdot \nabla R_1) \\
    \delta'(t-|x|)(R_2 -R_2) + 2 \dfrac{\delta(t-|x|)}{4\pi|x|} (|x|\partial_t R_2 + R_2 + x\cdot \nabla R_2) 
\end{bmatrix} \nonumber\\
& \quad\quad\quad + \begin{bmatrix}
      \mathscr{H}(t-|x|) \left(\partial_t^2 -\Delta \right) R_1(t,x)\\ \mathscr{H}(t-|x|)\left(\partial_t^2 -\Delta \right) R_2(t,x)
    \end{bmatrix}.
\end{align}
Using Equations \eqref{Green's function} and \eqref{Term C} in  Equation \eqref{eq:A+B+C+D}, we have
\begin{align*}
    \left(\Box I_{2\times2} -  \mathfrak{P}(x)\right)\overrightarrow{U}(t,x) & = \begin{bmatrix}
        \delta(t,x)\\
        \delta(t,x)
    \end{bmatrix} + 2 \begin{bmatrix}
      \dfrac{\delta(t-|x|)}{4\pi|x|} (|x|\partial_t R_1 + R_1 + x\cdot \nabla R_1) \\
      \dfrac{\delta(t-|x|)}{4\pi|x|} (|x|\partial_t R_2 + R_2 + x\cdot \nabla R_2) 
\end{bmatrix} \\
&  \quad +  \begin{bmatrix}
      \mathscr{H}(t-|x|) \left(\partial_t^2 -\Delta \right) R_1(t,x)\\ \mathscr{H}(t-|x|)\left(\partial_t^2 -\Delta \right) R_2(t,x)
    \end{bmatrix} - 
    \begin{bmatrix}
        \left(\beta_{11}(x) + \beta_{12}(x)\right)\dfrac{\delta(t - |x|)}{4\pi|x|} \\
        \left(\beta_{21}(x) + \beta_{22}(x)\right)\dfrac{\delta(t - |x|)}{4\pi|x|}
    \end{bmatrix} \\
    & \quad - \begin{bmatrix}
       \left(\beta_{11}(x) + \beta_{12}(x)\right) \mathscr{H}(t-|x|)R_1(t,x)\\ \left(\beta_{21}(x) + \beta_{22}(x)\right)\mathscr{H}(t-|x|) R_2(t,x)
    \end{bmatrix}.
\end{align*}
Consequently, $\overrightarrow{U}$ solves the point-source problem
\eqref{system of IVP} provided $\overrightarrow{R}$ solves the following Goursat BVP 
\begin{align*}
\begin{cases}
     \left(\Box I_{2\times2} -  \mathfrak{P}(x)\right)\overrightarrow{R}(t,x) = \overrightarrow{0},\quad  x\in \mathbb{R}^3, t> |x|,  \\
    \left(|x|\partial_t  + 1 + x\cdot \nabla \right)\overrightarrow{R}(t,x)= \frac{1}{8\pi}\mathfrak{P}\overrightarrow{\mathscr{A}}, \quad  x\in \mathbb{R}^3, t = |x|.
\end{cases}
\end{align*}
  Now if  we define $\overrightarrow{G}(x):=|x|\overrightarrow{R}(|x|,x)$, then using the chain rule yields that 
    \begin{align*}
     \begin{bmatrix}
         \frac{x}{|x|}\cdot \nabla G_{1}\\[2pt]
         \frac{x}{|x|}\cdot \nabla G_{2}
     \end{bmatrix}    = \begin{bmatrix}
         \left(|x|\partial_t  + 1 + x\cdot \nabla \right){R}_1\\[2pt]
         \left(|x|\partial_t  + 1 + x\cdot \nabla \right){R}_2
     \end{bmatrix} = \frac{1}{8\pi}\mathfrak{P}\overrightarrow{\mathscr{A}}.
    \end{align*}
Now set $\omega:=\dfrac{x}{|x|}$ and define the unit-speed straight line from origin (i.e. $ O = (0,0,0)$) to $x$ by
\begin{align*}
    \gamma(s):=s\,\omega,\qquad 0\le s\le |x|.
\end{align*}
Using the chain rule, we have
\begin{align*}
\begin{bmatrix}
          \frac{d}{ds} G_{1}(\beta(s))\\[2pt]
           \frac{d}{ds}G_{2}(\gamma(s))
     \end{bmatrix}
=\begin{bmatrix}
          \nabla G_{1}(\gamma(s))\cdot\gamma'(s)\\[2pt]
          \nabla G_{2}(\gamma(s))\cdot\gamma'(s)
     \end{bmatrix}
=\begin{bmatrix}
         \omega\cdot \nabla G_{1}\\[2pt]
         \omega\cdot \nabla G_{2}
     \end{bmatrix} = \frac{1}{8\pi}\mathfrak{P}\bigl(s \omega\bigr)\overrightarrow{\mathscr{A}}.
\end{align*}
Integrating from $0$ to $|x|$ yields
\begin{align*}
\overrightarrow{G}(x)-\overrightarrow{G}(O)
&=\int_{0}^{|x|}\frac{d}{ds}\overrightarrow{G}(\gamma(s))\,ds
=\frac{1}{8\pi}\int_{0}^{|x|} \frac{1}{8\pi}\mathfrak{P}\bigl(s a\bigr)\overrightarrow{\mathscr{A}}\,ds .
\end{align*}
Since $\overrightarrow{G}(O)=0$, we get
\begin{align*}
    \overrightarrow{G}(x)=\frac{1}{8\pi}\int_{0}^{|x|}
\mathfrak{P}\!\left(s\,\frac{x}{|x|}\right)\overrightarrow{\mathscr{A}}\,ds.
\end{align*}
Recalling $\overrightarrow{G}(x)=|x|\, \overrightarrow{R}\bigl(|x|,x\bigr)$ and dividing by $|x|$, we have
\begin{align*}
    \overrightarrow{R}\bigl(x,|x|\bigr)
=\frac{1}{8\pi|x|}\int_{0}^{|x|}
\mathfrak{P}\!\left(s\,\frac{x}{|x|}\right)\overrightarrow{\mathscr{A}}\,ds .
\end{align*}
With the substitution $s=|x|t$ (so $t\in[0,1]$),  it follows that
\begin{align*}
    \overrightarrow{R}\bigl(|x|,x\bigr)
&=\frac{1}{8\pi}\int_{0}^{1} \mathfrak{P}\bigl(sx\bigr)\overrightarrow{\mathscr{A}}\,ds,\nonumber\\
&=  \begin{bmatrix}
      \frac{1}{8\pi} \int_0^1 (\beta_{11} + \beta_{12})(tx)\,dt\\[2pt]
      \frac{1}{8\pi} \int_0^1 (\beta_{21} + \beta_{22})(tx)\,dt
    \end{bmatrix}.
\end{align*}
Thus, in order to prove the solutions to \eqref{system of IVP} have the form given by Equation \eqref{Ex for u vector}, we must show the existence of a solution to the following Goursat BVP 
 \begin{align}\label{P;Goursat problem}
     \begin{cases}
         \left(\Box I_{2\times2} -  \mathfrak{P}(x)\right)\overrightarrow{R}(t,x) = \overrightarrow{0},\quad&  x\in \mathbb{R}^3, t> |x|,  \\
    \overrightarrow{R}(t,x)  = \begin{bmatrix}
      \frac{1}{8\pi}  \int_0^1 (\beta_{11} + \beta_{12})(sx)ds\\[2pt]
      \frac{1}{8\pi}  \int_0^1 (\beta_{21} + \beta_{22})(sx)ds
    \end{bmatrix} \quad  &x\in \mathbb{R}^3, t = |x|. 
     \end{cases}
 \end{align}
 Now, since the coupling is due to a  zeroth order term only, therefore following the arguments used in \cite{blaasten2017well,Friedlander_1975}, it can be shown that \eqref{P;Goursat problem} 
 possesses a unique $C^{1}$ solution for a 
sufficiently smooth matrix potential $\mathfrak{P}$. This established the existence for a unique Fundamental solution $\overrightarrow{U}(t,x)$ of the form given by Equation \eqref{Ex for u vector},  to the operator $\Box I_{2\times2}-\mathfrak{P}$ such that $\overrightarrow{U}(t,x)=0$, for  $t<0$. This concludes the proof of the following theorem. 
\begin{theorem}\label{THM1 FS origin}
    Consider $\mathfrak{P}\in C^{\infty}(\mathbb{R}^3)$ is a matrix-valued real potential and $\overrightarrow{U}$ be a solution to the IVP given by Equation \eqref{system of IVP}. Then $\overrightarrow{U}$ is given by \[\overrightarrow{U}(t,x) = \frac{\delta(t-|x|)}{4\pi|x|}\overrightarrow{\mathscr{A}} + \overrightarrow{R}(t,x),\] where $\overrightarrow{R}(t,x)=0$ in the region $t<\lvert x\rvert$, and for the region $t>\lvert x\rvert$, $\overrightarrow{R}$ solves the following Goursat BVP 
    \begin{align*}
        \begin{aligned}
\begin{cases}
         \left(\Box I_{2\times2} -  \mathfrak{P}(x)\right)\overrightarrow{R}(t,x) = \overrightarrow{0},\quad&  x\in \mathbb{R}^3, t> |x|,  \\
    \overrightarrow{R}(t,x)  =  \begin{bmatrix}
      \frac{1}{8\pi} \int_0^1 (\beta_{11} + \beta_{12})(sx)ds\\
       \frac{1}{8\pi}\int_0^1 (\beta_{21} + \beta_{22})(sx)ds
    \end{bmatrix} \quad  &x\in \mathbb{R}^3, t = |x|. 
     \end{cases}
        \end{aligned}
    \end{align*}\end{theorem}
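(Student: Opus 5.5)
The plan is to combine the ansatz computation above, which reduces the point-source problem to the Goursat problem, with an existence and uniqueness argument for that Goursat problem; uniqueness of $\overrightarrow{U}$ itself is a separate step.

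\emph{Step 1 (reduction).} Take the ansatz $\overrightarrow{U}=\frac{\delta(t-|x|)}{4\pi|x|}\overrightarrow{\mathscr{A}}+\mathscr{H}(t-|x|)\overrightarrow{R}$. The distributional computation \eqref{eq:A+B+C+D}--\eqref{Term C} shows that $\overrightarrow{U}$ solves \eqref{system of IVP} exactly when two conditions hold. First, $(\Box I_{2\times2}-\mathfrak{P})\overrightarrow{R}=\overrightarrow{0}$ for $t>|x|$. Second, the transport condition $(|x|\partial_t+1+x\cdot\nabla)\overrightarrow{R}=\frac{1}{8\pi}\mathfrak{P}\overrightarrow{\mathscr{A}}$ holds on the cone. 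Integrating along rays, as done above, turns the transport condition into the stated Goursat datum. Causality ($\overrightarrow{U}=0$ for $t<0$) is automatic, since the support lies in $\{t\ge|x|\}$.

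\emph{Step 2 (Goursat problem).} To get existence for \eqref{P;Goursat problem}, I would follow Friedlander/Bl\aa sten and convert the problem into a Volterra-type integral equation on the solid cone. Use characteristic coordinates and the Kirchhoff-type representation of $\Box$ in the region $t>|x|$: $\overrightarrow{R}$ equals a term determined by the cone data $\overrightarrow{g}$ plus an integral operator $K[\mathfrak{P}\overrightarrow{R}]$. Here $K$ integrates over the backward characteristic cone from $(t,x)$, truncated by the forward cone $t=|x|$. Because the coupling is only through the zeroth-order term $\mathfrak{P}\overrightarrow{R}$, the system case is the scalar argument applied componentwise, with $|\beta_{ij}|$ replaced by the matrix norm $\|\mathfrak{P}\|$.

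Fix a compact region $\{|x|\le t\le T_0\}$. Set up Picard iteration $\overrightarrow{R}_{n+1}=\overrightarrow{R}_0+K[\mathfrak{P}\overrightarrow{R}_n]$ and prove by induction the bound $|\overrightarrow{R}_{n+1}-\overrightarrow{R}_n|\le C\,(M t)^{n}/n!$, where $M=\sup\|\mathfrak{P}\|$ on the region. This gives convergence in $C^0$. Differentiating the representation, and using that $\mathfrak{P}\in C^\infty$, gives $C^1$ convergence, and higher regularity by induction as well. Uniqueness follows from the same Volterra estimate together with Grönwall's inequality, or alternatively from an energy estimate on the truncated cones.

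\emph{Step 3 (uniqueness of $\overrightarrow{U}$).} Any two solutions of \eqref{system of IVP} differ by a distribution $\overrightarrow{W}$ satisfying $(\Box-\mathfrak{P})\overrightarrow{W}=0$ and vanishing for $t<0$. Standard uniqueness for the Cauchy problem of a hyperbolic system with smooth zeroth-order coupling gives $\overrightarrow{W}\equiv 0$. One way is to use finite propagation speed and energy estimates, regularizing by convolution in $t$ first. Another is duality with the adjoint backward problem, whose smooth solvability is classical. Hence the solution constructed in Step 2 is the solution, and it has the stated form.

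\textbf{Main obstacle.} I expect the hardest part to be Step 2 near the vertex $x=0$. There the characteristic cone degenerates, and the kernel of $K$ together with the factor $1/|x|$ must be controlled to obtain $C^1$ regularity up to the cone. I would handle this as Bl\aa sten does: use the coordinates $\rho=t+|x|$, $\sigma=t-|x|$, write the equation as a first-order-in-each-variable integral system for $(\overrightarrow{R},\partial\overrightarrow{R})$, and prove the factorial bounds in those coordinates.
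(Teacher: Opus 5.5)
Your proposal is correct and is essentially the paper's argument: the same ansatz, the same reduction to $(\Box I_{2\times2}-\mathfrak{P})\overrightarrow{R}=\overrightarrow{0}$ in $t>|x|$ plus the transport condition $(|x|\partial_t+1+x\cdot\nabla)\overrightarrow{R}=\frac{1}{8\pi}\mathfrak{P}\overrightarrow{\mathscr{A}}$ on the cone, then integration along rays to get the Goursat datum, and an appeal to Friedlander/Bl\aa sten for the Goursat problem. (Your constant $\frac{1}{8\pi}$ is the right one; it comes from the jump term $\frac{2\delta(t-|x|)}{|x|}(|x|\partial_t+1+x\cdot\nabla)\overrightarrow{R}$, without the stray $4\pi$ that appears in \eqref{Term C}.) The paper only cites the Goursat theory and asserts uniqueness of $\overrightarrow{U}$ without proof, so your Volterra--Picard sketch in Step 2 and your Step 3 (uniqueness of distributional solutions of \eqref{system of IVP} vanishing for $t<0$, which the claim ``any solution has this form'' actually requires) fill in details rather than change the route.
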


 
\subsection{The point-source at {$e$}}
In this subsection, we provide an expression of the fundamental solution for the coupled system of wave equations with a point-source located at point $e=(1,0,0)$.
The solution to this problem is obtained by translating 
the solution given by Equation ~\eqref{Ex for u vector} to the IVP \eqref{system of IVP} by the vector $e$. More precisely, the following theorem can be established using the above-mentioned translation.  
\begin{theorem}\label{THM1 FS non origin}
\label{thm:;uniqueness}
Let $\mathfrak{P}$ be $2\times2$ matrix with smooth entries and $e$ is a unit vector in $\mathbb{R}^3$. Then the point-source problem 
\begin{align}\label{system of IBVP; source}
    \begin{cases}
        (\Box I_{2\times2} -  \mathfrak{P}(x))\overrightarrow{W}(t,x)  = {\delta}(t,x-e)\overrightarrow{\mathscr{A}},&\quad(t,x)\in \mathbb{R} \times\mathbb{R}^3, \\
       \overrightarrow{W}(t,x)  = \overrightarrow{0}, &\quad (t,x)\in (-\infty, 0)\times\mathbb{R}^3.
    \end{cases}
\end{align}
has a unique solution $\overrightarrow {w} $ having the following expression 
\begin{align*}
     \overrightarrow{W}(t,x) = \frac{\delta(t-|x-e|)}{4\pi|x-e|}\overrightarrow{\mathscr{A}} + \overrightarrow{R}(t,x),
\end{align*}
where 
   $\overrightarrow{R} = 
    \begin{bmatrix}
        R_1\\R_2
    \end{bmatrix},$
  with $R_i\in C^{1}(\mathbb{R}\times \mathbb{R}^3)$ vanishes in the region $t<\lvert x-e\rvert$, and for the region $t>\lvert x-e\rvert$, $\overrightarrow{R}$  solves the following Goursat BVP 
     \begin{align*}
         \begin{cases}
     \left(\Box I_{2\times2} -  \mathfrak{P}(x)\right)\overrightarrow{R}(t,x) = \overrightarrow{0},\quad  x\in \mathbb{R}^3,\  t> |x-e|,  \\
    \overrightarrow{R}(t,x) = \begin{bmatrix}
        \frac{1}{8\pi}\int_{0}^{1}(\beta_{11}+\beta_{12})(sx+(1-s)e)ds\\[2pt]
        \frac{1}{8\pi}\int_{0}^{1}(\beta_{21}+\beta_{22})(sx+(1-s)e)ds
    \end{bmatrix}, \quad  x\in \mathbb{R}^3, \ t = |x-e|. 
\end{cases}
     \end{align*}
\end{theorem}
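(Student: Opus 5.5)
We reduce the statement to Theorem \ref{THM1 FS origin} by translation. Define the translated potential $\widetilde{\mathfrak{P}}(y):=\mathfrak{P}(y+e)$. Its entries $\widetilde{\beta}_{ij}(y)=\beta_{ij}(y+e)$ are again smooth. Let $\overrightarrow{U}$ be the fundamental solution of $\Box I_{2\times2}-\widetilde{\mathfrak{P}}$ supplied by Theorem \ref{THM1 FS origin}, and set
\begin{align*}
\overrightarrow{W}(t,x):=\overrightarrow{U}(t,x-e).
\end{align*}

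\textbf{Existence.} The operator $\Box$ commutes with spatial translations, so
\begin{align*}
\bigl(\Box I_{2\times2}-\mathfrak{P}(x)\bigr)\overrightarrow{W}(t,x)=\Bigl[\bigl(\Box I_{2\times2}-\widetilde{\mathfrak{P}}(y)\bigr)\overrightarrow{U}(t,y)\Bigr]_{y=x-e}=\delta(t,x-e)\overrightarrow{\mathscr{A}}.
\end{align*}
Here the identity holds in the sense of distributions, since translation of distributions is defined by duality. For $t<0$ we have $\overrightarrow{W}=\overrightarrow{0}$, inherited from $\overrightarrow{U}$.

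\textbf{Form of the solution.} Transporting the representation of $\overrightarrow{U}$ gives
\begin{align*}
\overrightarrow{W}(t,x)=\frac{\delta(t-|x-e|)}{4\pi|x-e|}\overrightarrow{\mathscr{A}}+\overrightarrow{R}(t,x),\qquad \overrightarrow{R}(t,x):=\widetilde{\overrightarrow{R}}(t,x-e),
\end{align*}
where $\widetilde{\overrightarrow{R}}$ is the remainder from Theorem \ref{THM1 FS origin}. The following properties carry over directly:
\begin{itemize}
\item $\overrightarrow{R}$ vanishes for $t<|x-e|$.
\item $\overrightarrow{R}$ solves $(\Box I_{2\times2}-\mathfrak{P})\overrightarrow{R}=\overrightarrow{0}$ for $t>|x-e|$.
\item $\overrightarrow{R}$ is $C^1$, because translation preserves regularity.
\end{itemize}
For the characteristic data, Theorem \ref{THM1 FS origin} gives $\widetilde{R}_1(|y|,y)=\frac{1}{8\pi}\int_0^1(\widetilde\beta_{11}+\widetilde\beta_{12})(sy)\,ds$, and similarly for $\widetilde{R}_2$. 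Substituting $y=x-e$ and using $sy+e=sx+(1-s)e$ yields exactly the stated boundary values on $t=|x-e|$.

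\textbf{Uniqueness.} Suppose two solutions exist and let $\overrightarrow{V}$ be their difference. Then $\overrightarrow{V}$ solves the homogeneous system $(\Box I_{2\times2}-\mathfrak{P})\overrightarrow{V}=\overrightarrow{0}$ with $\overrightarrow{V}=\overrightarrow{0}$ for $t<0$.

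For the Cauchy problem of a hyperbolic system with zeroth-order coupling, a standard energy estimate on backward light cones gives $\overrightarrow{V}\equiv\overrightarrow{0}$. Multiply by $\partial_t\overrightarrow{V}$ and bound the coupling term by $\|\mathfrak{P}\|_{L^\infty}$ on compact sets; Grönwall's inequality then closes the estimate.

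Alternatively, one can translate back to the origin and invoke the uniqueness already asserted in Theorem \ref{THM1 FS origin}.

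\textbf{Main obstacle.} The only delicate point is justifying uniqueness for distributional solutions rather than smooth ones. Since the difference satisfies a homogeneous equation with smooth coefficients and vanishes for $t<0$, one can mollify in $x$, or use Holmgren/duality with the adjoint problem, to reduce to the energy argument. I would cite the corresponding uniqueness from \cite{Friedlander_1975,blaasten2017well}, exactly as is done for the origin case, so the translation argument is essentially the whole proof.
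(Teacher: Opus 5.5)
Your proposal is correct, but it is not how the paper's written proof goes. The paper does announce that the result is obtained by translating the origin case. Its proof, however, repeats the ansatz computation centred at $e$. It applies $\Box I_{2\times2}-\mathfrak{P}$ to $\frac{\delta(t-|x-e|)}{4\pi|x-e|}\overrightarrow{\mathscr{A}}+\mathscr{H}(t-|x-e|)\overrightarrow{R}$ and collects the terms supported on the cone, giving the transport condition $(|x-e|\partial_t+1+(x-e)\cdot\nabla)\overrightarrow{R}=\frac{1}{8\pi}\mathfrak{P}\overrightarrow{\mathscr{A}}$ on $t=|x-e|$. It then integrates $|x-e|\,\overrightarrow{R}(|x-e|,x)$ along the rays $e+s\frac{x-e}{|x-e|}$ to obtain the characteristic data. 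Existence of the Goursat solution is deferred to ``arguments similar to those in the previous theorem''.

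Your reduction via $\widetilde{\mathfrak{P}}(y)=\mathfrak{P}(y+e)$ and $\overrightarrow{W}(t,x)=\overrightarrow{U}(t,x-e)$ is shorter and less error-prone. The equation, the support and regularity of $\overrightarrow{R}$, and the Goursat problem are inherited at once, and the boundary values drop out of $s(x-e)+e=sx+(1-s)e$. You also avoid redoing the distributional calculation, where the paper's version has bookkeeping slips: a stray factor $4\pi$ in the $\delta$-term and a doubled $\frac{1}{8\pi}$. You address uniqueness too, which the paper only asserts.

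The paper's direct route is self-contained and displays the transport form of the cone condition, which is how the Goursat problem is restated in Section~\ref{sec_proofs_main_results}. That form also follows from your argument by translating the corresponding condition at the origin. Both arguments rest on the same input: well-posedness of the Goursat problem at the origin, taken from \cite{blaasten2017well,Friedlander_1975}.

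For uniqueness of genuinely distributional solutions, the duality argument is the better of your two suggestions. Pair $\overrightarrow{V}$ with a time cutoff of the smooth solution of the backward adjoint problem (potential $[\mathfrak{P}]^{t}$) with source $\varphi\in C_c^\infty$. Finite speed of propagation makes that cutoff a compactly supported test function. Mollifying only in $x$ is more delicate: it produces a commutator with $\mathfrak{P}$, and it does not by itself give enough regularity in $t$ to run the energy estimate.
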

\begin{proof}
   We first apply the operator $\Box I_{2\times2} -  \mathfrak{P}(x)$ to the ansatz
\begin{align}
    \overrightarrow{W}(t,x) = \frac{\delta(t-|x-e|)}{4\pi|x-e|}\overrightarrow{\mathscr{A}} + \mathscr{H}(t-|x-e|)\overrightarrow{R}(t,x),
\end{align}
where
\begin{align*}
    \overrightarrow{\mathscr{A}} = 
    \begin{bmatrix}
        1\\1
    \end{bmatrix} \quad \text{and} \quad\overrightarrow{R} = 
    \begin{bmatrix}
        R_1\\R_2
    \end{bmatrix},
\end{align*}
gives
\begin{align}\label{eq:I+J+K+L}
    &\left(\Box I_{2\times2} - \mathfrak{P}(x)\right)\overrightarrow{W}(t,x) = 
    \begin{bmatrix}
        \partial_t^2 - \Delta - \beta_{11}(x) & -\beta_{12}(x) \\
        -\beta_{21}(x) & \partial_t^2 - \Delta - \beta_{22}(x)
    \end{bmatrix}
    \begin{bmatrix}
        W_1(t,x) \\ W_2(t,x)
    \end{bmatrix}, \nonumber\\
    &= 
    \begin{bmatrix}
        \partial_t^2 - \Delta - \beta_{11}(x) & -\beta_{12}(x) \\
        -\beta_{21}(x) & \partial_t^2 - \Delta - \beta_{22}(x)
    \end{bmatrix}
    \left\{
        \frac{\delta(t - |x - e|)}{4\pi|x - e|}
        \begin{bmatrix}
            1 \\ 1
        \end{bmatrix}
        + \mathscr{H}(t - |x - e|)
        \begin{bmatrix}
            R_1(t,x) \\ R_2(t,x)
        \end{bmatrix}
    \right\}, \nonumber\\
    &=
    \begin{bmatrix}
        \left(\partial_t^2 - \Delta\right)\dfrac{\delta(t - |x - e|)}{4\pi|x - e|} \\
        \left(\partial_t^2 - \Delta\right)\dfrac{\delta(t - |x - e|)}{4\pi|x - e|}
    \end{bmatrix}
    - 
    \begin{bmatrix}
        \left(\beta_{11}(x) + \beta_{12}(x)\right)\dfrac{\delta(t - |x - e|)}{4\pi|x - e|} \\
        \left(\beta_{21}(x) + \beta_{22}(x)\right)\dfrac{\delta(t - |x - e|)}{4\pi|x - e|}
    \end{bmatrix} \nonumber\\
    &\quad +
    \begin{bmatrix}
        \left(\partial_t^2 - \Delta\right)\bigl[\mathscr{H}(t - |x - e|)R_1(t,x)\bigr] \\
        \left(\partial_t^2 - \Delta\right)\bigl[\mathscr{H}(t - |x - e|)R_2(t,x)\bigr]
    \end{bmatrix}
    -
    \begin{bmatrix}
        \beta_{11}(x) & \beta_{12}(x) \\
        \beta_{21}(x) & \beta_{22}(x)
    \end{bmatrix}
    \begin{bmatrix}
        \mathscr{H}(t - |x - e|)\, R_1(t,x) \\ 
        \mathscr{H}(t - |x - e|)\, R_2(t,x)
    \end{bmatrix}, \nonumber\\
    & = I + J + K + L.
\end{align}
Now, we simplify the term $K$
\begin{align}\label{Term K}
    & \begin{bmatrix}
       \left(\partial_t^2 -\Delta \right) \mathscr{H}(t-|x-e|)R_1(t,x)\\ \left(\partial_t^2 -\Delta \right)\mathscr{H}(t-|x-e|) R_2(t,x)
    \end{bmatrix}\nonumber\\
& \quad\quad= \begin{bmatrix}
    \delta'(t-|x-e|)(R_1 -R_1) + 2 \frac{\delta(t-|x-e|)}{4\pi|x-e|} (|x-e|\partial_t R_1 + R_1 + (x-e)\cdot \nabla R_1) \\
    \delta'(t-|x-e|)(R_2 -R_2) + 2 \frac{\delta(t-|x-e|)}{4\pi|x-e|} (|x-e|\partial_t R_2 + R_2 + (x-e)\cdot \nabla R_2) 
\end{bmatrix} \nonumber\\
& \quad\quad\quad + \begin{bmatrix}
      \mathscr{H}(t-|x-e|) \left(\partial_t^2 -\Delta \right) R_1(t,x)\\ \mathscr{H}(t-|x-e|)\left(\partial_t^2 -\Delta \right) R_2(t,x)
    \end{bmatrix}.
\end{align}
Observe that the first term corresponds to the Green's function associated with the operator $\partial_t^2 - \Delta$, see \cite{Friedlander_1975}, therefore, we have 
\begin{align}\label{Green function}
    (\partial_t^2 - \Delta)\frac{\delta(t-|x-e|)}{4\pi|x-e|} = \delta(t,x-e).
\end{align}
Using equations \eqref{Green function}, \eqref{Term K} and \eqref{eq:I+J+K+L}, we get
\begin{align*}
    \left(\Box I_{2\times2} -  \mathfrak{P}(x)\right)&\overrightarrow{W}(t,x)  = \begin{bmatrix}
        \delta(t,x-e)\\
        \delta(t,x-e)
    \end{bmatrix} + 2 \begin{bmatrix}
      \frac{\delta(t-|x-e|)}{4\pi|x-e|} (|x-e|\partial_t R_1 + R_1 + (x-e)\cdot \nabla R_1) \\
      \frac{\delta(t-|x-e|)}{4\pi|x-e|} (|x-e|\partial_t R_2 + R_2 + (x-e)\cdot \nabla R_2) 
\end{bmatrix} \\
& +  \begin{bmatrix}
      \mathscr{H}(t-|x-e|) \left(\partial_t^2 -\Delta \right) R_1(t,x)\\ \mathscr{H}(t-|x-e|)\left(\partial_t^2 -\Delta \right) R_2(t,x)
    \end{bmatrix} - 
    \begin{bmatrix}
        \left(\beta_{11}(x) + \beta_{12}(x)\right)\dfrac{\delta(t - |x - e|)}{4\pi|x - e|} \\
        \left(\beta_{21}(x) + \beta_{22}(x)\right)\dfrac{\delta(t - |x - e|)}{4\pi|x - e|}
    \end{bmatrix} \\
    & - \begin{bmatrix}
       \left(\beta_{11}(x) + \beta_{12}(x)\right) \mathscr{H}(t-|x-e|)R_1(t,x)\\ \left(\beta_{21}(x) + \beta_{22}(x)\right)\mathscr{H}(t-|x-e|) R_2(t,x)
    \end{bmatrix}.
\end{align*}
Consequently, $\overrightarrow{W}$ solves the point-source problem
\eqref{system of IVP} provided $\overrightarrow{R}$ solves the following Goursat BVP
\begin{align*}
\begin{cases}
     \left(\Box I_{2\times2} -  \mathfrak{P}(x)\right)\overrightarrow{R}(t,x) = \overrightarrow{0},\quad  x\in \mathbb{R}^3, t> |x-e|,  \\
    \left(|x-e|\partial_t  + 1 + (x-e)\cdot \nabla \right)\overrightarrow{R} = \frac{1}{8\pi}\mathfrak{P}\overrightarrow{\mathscr{A}}, \quad  x\in \mathbb{R}^3, t = |x-e|.
\end{cases}
\end{align*}
   Now if we define $\overrightarrow{\mathcal{G}}(x) := |x-e|\overrightarrow{R}(|x-e|,x)$, then using the chain rule we have 
    \begin{align}
     \begin{bmatrix}
         \frac{x-e}{|x-e|}\cdot \nabla \mathcal{G}_{1}\\
         \frac{x-e}{|x-e|}\cdot \nabla \mathcal{G}_{2}
     \end{bmatrix}    = \begin{bmatrix}
         \left(|x-e|\partial_t  + 1 + (x-e)\cdot \nabla \right){R_1}\\
         \left(|x-e|\partial_t  + 1 + (x-e)\cdot \nabla \right){R_2}
     \end{bmatrix} = \frac{1}{8\pi}\mathfrak{P}\overrightarrow{\mathscr{A}}.
    \end{align}
Now set $\widetilde{\omega}:=\dfrac{x-e}{|x-e|}$ and define the unit-speed straight line from $e$ to $x$ by
\begin{align*}
    \widetilde{\gamma}(s):=e+s\,\widetilde{\omega},\qquad 0\le s\le |x-e|.
\end{align*}
By the chain rule, we have
\begin{align*}
\begin{bmatrix}
          \frac{d}{ds} \mathcal{G}_{1}(\widetilde{\gamma}(s))\\
           \frac{d}{ds}\mathcal{G}_{2}(\widetilde{\gamma}(s))
     \end{bmatrix}
=\begin{bmatrix}
          \nabla \mathcal{G}_{1}(\widetilde{\gamma}(s))\cdot\widetilde{\gamma}'(s)\\
          \nabla \mathcal{G}_{2}(\widetilde{\gamma}(s))\cdot\widetilde{\gamma}'(s)
     \end{bmatrix}
=\begin{bmatrix}
         \widetilde{\omega}\cdot \nabla \mathcal{G}_{1}\\
         \widetilde{\omega}\cdot \nabla \mathcal{G}_{2}
     \end{bmatrix} = \frac{1}{8\pi}\mathfrak{P}\bigl(e+s \widetilde{\omega}\bigr)\overrightarrow{\mathscr{A}}.
\end{align*}
Integrating from $0$ to $|x-e|$ yields
\begin{align*}
\overrightarrow{\mathcal{G}}(x)-\overrightarrow{\mathcal{G}}(e)
&=\int_{0}^{|x-e|}\frac{d}{ds}\overrightarrow{\mathcal{G}}(\widetilde{\gamma}(s))\,ds
=\frac{1}{8\pi}\int_{0}^{|x-e|} \frac{1}{8\pi}\mathfrak{P}\bigl(e+s \widetilde{\omega}\bigr)\overrightarrow{\mathscr{A}}\,ds .
\end{align*}
Since $\overrightarrow{\mathcal{G}}(e)=|e-e|\,\overrightarrow{R}(0,e)=0$, we get
\begin{align*}
    \overrightarrow{\mathcal{G}}(x)=\frac{1}{8\pi}\int_{0}^{|x-e|}
\mathfrak{P}\!\left(e+s\,\frac{x-e}{|x-e|}\right)\overrightarrow{\mathscr{A}}\,ds.
\end{align*}
Recalling $\overrightarrow{\mathcal{G}}(x)=|x-e|\, \overrightarrow{R}\bigl(|x-e|,x\bigr)$ and dividing by $|x-e|$, we have
\begin{align}
    \overrightarrow{R}\bigl(|x-e|,x\bigr)
=\frac{1}{8\pi|x-e|}\int_{0}^{|x-e|}
\mathfrak{P}\!\left(e+s\,\frac{x-e}{|x-e|}\right)\overrightarrow{\mathscr{A}}\,ds .
\end{align}
With the substitution $s=|x-e|t$ (so $t\in[0,1]$), we obtain 
\begin{align}
    \overrightarrow{R}\bigl(|x-e|,x\bigr)
&=\frac{1}{8\pi}\int_{0}^{1} \mathfrak{P}\bigl(e+t(x-e)\bigr)\overrightarrow{\mathscr{A}}\,dt,\nonumber\\
&=  \begin{bmatrix}
      \frac{1}{8\pi} \int_0^1 (\beta_{11} + \beta_{12})(e+t(x-e))ds\\[2pt]
      \frac{1}{8\pi} \int_0^1 (\beta_{21} + \beta_{22})(e+t(x-e))ds
    \end{bmatrix}.
\end{align}
Using arguments similar to those in the previous theorem, the existence of $\overrightarrow{R}$ is established. This concludes the proof.
\end{proof}


   \section{Proof of main theorems}\label{sec_proofs_main_results}
 In this section, we present the proofs of the main Theorems \ref{thm1}-\ref{thm4} and it is divided into two subsections; the first contains the comparable case where we give the proof of Theorems \ref{thm1} and \ref{thm3} and the second one consists of the proof of unique determination of coefficients under symmetry conditions stated in Theorems \ref{thm2} and \ref{thm4}. 
 In the proofs, we work with the following form of the solution to the point-source problem, given by IVP \eqref{system of IVP}.
Specifically, we absorb the Heaviside function into the regular part of the solution and write the solution of the IVP \eqref{system of IVP} given in  Equation \eqref{Expression for u vector}   as follows
\begin{align}\label{Solution; cpt form}
\overrightarrow{U}(t,x)
=
\frac{\delta(t-|x|)}{4\pi|x|}\,\overrightarrow{\mathscr{A}}
+
\overrightarrow{R}^{u}(t,x),
\end{align}
where $\overrightarrow{R}^{u}(t,x)=\overrightarrow{0}$ for $t<|x|$.
For $t>|x|$, $\overrightarrow{R}^{u}$ is a $C^{1}$ solution for the
following the Goursat problem
\begin{align*}
\begin{cases}
\bigl(\Box I_{2\times2}-\mathfrak{P}(x)\bigr)\overrightarrow{R}^{u}(t,x)
= \overrightarrow{0},
& x\in\mathbb{R}^{3},\ t>|x|, \\[1mm]
\bigl(|x|\partial_t + 1 + x\cdot\nabla\bigr)\overrightarrow{R}^{u}(t,x)
= \dfrac{1}{8\pi}\mathfrak{P}(x)\overrightarrow{\mathscr{A}},
& x\in\mathbb{R}^{3},\ t=|x|.
\end{cases}
\end{align*}
 This representation is particularly convenient for deriving integral identities and for applying integration by parts in the subsequent analysis, since $\overrightarrow{R}^{u}$ vanishes identically for $t<|x|$ and is smooth in the region $t>|x|$.

We begin by recalling some preliminary known results related to parametrization of the sphere and the ellipsoid, along with their surface measures, which will be used in the proofs of
Theorems~\ref{thm1}, \ref{thm2}, \ref{thm3} and~\ref{thm4}.
A point $x\in\mathbb{R}^3$ is represented in the spherical coordinates $(r,\theta,\phi)$ as
\begin{align*}
x=\bigl(r\sin\phi\cos\theta,\; r\sin\phi\sin\theta,\; r\cos\phi\bigr),
\end{align*}
where $r\ge0$, $0\le\theta<2\pi$, and $0\le\phi\le\pi$.
The corresponding volume element is given by
\begin{align}\label{volume element_sphere}
dx = r^{2}\sin\phi\,dr\,d\theta\,d\phi,
\end{align}
while the surface element on the sphere $\{x\in\mathbb{R}^3:|x|=r\}$ is
\begin{align}\label{surface element_sphere}
dS_x = r^{2}\sin\phi\,d\theta\,d\phi.
\end{align}
Next, we present a lemma that yields the parametrization in prolate spheroidal coordinates associated with an ellipsoid having the foci at $0$ and $e$.
\begin{lemma}\cite[Lemma~3.3]{w1} \label{lem3.1}
    Let $e=(1,0,0)$ and $x=(x_1,x_2,x_3)\in \mathbb{R}^3$ and consider the solid ellipsoid region
$|x-e|+|x|\leq r$. This solid ellipsoid admits a parametrization in prolate spheroidal coordinates $(\rho,\theta,\phi)$ given by
\begin{align}\label{e13}
\begin{cases}
x_1 = \dfrac{1}{2} + \dfrac{1}{2}\cosh\rho \cos\phi,\\[6pt]
x_2 = \dfrac{1}{2}\sinh\rho \sin\theta \sin\phi,\\[6pt]
x_3 = \dfrac{1}{2}\sinh\rho \cos\theta \sin\phi,
\end{cases}
\end{align}
where $\cosh\rho \leq r$, $\theta \in (0,2\pi)$, and $\phi \in (0,\pi)$. The surface measure denoted by $\,dS_x$ on the boundary $|x-e|+|x|=r$ is
\begin{align}\label{e14}
dS_x = \frac{1}{4}\sinh\rho \sin\phi \sqrt{\cosh^2\rho - \cos^2\phi}\, d\theta\, d\phi,
\end{align}
with $\cosh\rho = r$, $\theta \in [0,2\pi]$, and $\phi \in [0,\pi]$.
The corresponding volume element denoted by $\,dx$ in the region $|x-e|+|x|\leq r$ is
\begin{align}\label{e15}
dx = \frac{1}{8}\sinh\rho \sin\phi \left(\cosh^2\rho - \cos^2\phi\right)\, d\rho\, d\theta\, d\phi,
\end{align}
where $\cosh\rho \leq r$, $\theta \in [0,2\pi]$, and $\phi \in [0,\pi]$.
\end{lemma}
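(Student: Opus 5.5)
The plan is to verify the lemma by direct computation in prolate spheroidal coordinates with half focal distance $c=\tfrac12$, centred at the midpoint $\tfrac12 e$ of the foci $O$ and $e$. Write $x=\tfrac12 e+\tfrac12\bigl(\cosh\rho\cos\phi,\ \sinh\rho\sin\theta\sin\phi,\ \sinh\rho\cos\theta\sin\phi\bigr)$ as in \eqref{e13}.

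\textbf{Step 1 (the region).} First I would compute the focal distances. Using $\sinh^2\rho=\cosh^2\rho-1$, one gets
\begin{align*}
|x|^2=\tfrac14\bigl(1+\cosh\rho\cos\phi\bigr)^2+\tfrac14\sinh^2\rho\sin^2\phi=\tfrac14\bigl(\cosh\rho+\cos\phi\bigr)^2 .
\end{align*}
The last equality follows by expanding and using $\sin^2\phi=1-\cos^2\phi$. Similarly, $|x-e|=\tfrac12(\cosh\rho-\cos\phi)$. Both expressions are nonnegative because $\cosh\rho\ge1\ge|\cos\phi|$. Hence $|x|+|x-e|=\cosh\rho$.

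It follows that the level set $|x|+|x-e|=r$ corresponds exactly to $\cosh\rho=r$, and the solid ellipsoid corresponds to $\cosh\rho\le r$. Surjectivity onto the ellipsoid comes from the usual argument. Given $x$, set $\cosh\rho=|x|+|x-e|\ge1$ and $\cos\phi=|x|-|x-e|\in[-1,1]$ (triangle inequality). Then $\theta$ is the polar angle of $(x_3,x_2)$ in the plane orthogonal to $e$. The map is injective off the degenerate set $\{\rho=0\}\cup\{\sin\phi=0\}$, which has measure zero.

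\textbf{Step 2 (scale factors).} Next I would differentiate \eqref{e13} and check that the coordinate vectors $x_\rho, x_\theta, x_\phi$ are mutually orthogonal. Only terms involving $\sinh\rho\cosh\rho\sin\phi\cos\phi$ appear, and they cancel. The scale factors are
\begin{align*}
h_\rho=h_\phi=\tfrac12\sqrt{\sinh^2\rho\cos^2\phi+\cosh^2\rho\sin^2\phi}=\tfrac12\sqrt{\cosh^2\rho-\cos^2\phi},\qquad h_\theta=\tfrac12\sinh\rho\sin\phi .
\end{align*}
The simplification under the square root uses $\sinh^2\rho\cos^2\phi+\cosh^2\rho\sin^2\phi=\cosh^2\rho-\cos^2\phi$.

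\textbf{Step 3 (measures).} By orthogonality, the volume element is $h_\rho h_\theta h_\phi\,d\rho\,d\theta\,d\phi$, which gives \eqref{e15} with constant $\tfrac18$. On the surface $\cosh\rho=r$, with $\rho$ fixed, the area element is $|x_\theta\times x_\phi|\,d\theta\,d\phi=h_\theta h_\phi\,d\theta\,d\phi$, which gives \eqref{e14} with constant $\tfrac14$.

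I do not expect a genuine obstacle here; the statement is cited from \cite[Lemma~3.3]{w1}. The only delicate point is the algebra in Step 2. The orthogonality cancellation and the identity under the square root are exactly where a sign or constant error would propagate into \eqref{e14} and \eqref{e15}. As a cross-check, I would recompute the full Jacobian determinant directly and confirm it equals $\tfrac18\sinh\rho\sin\phi(\cosh^2\rho-\cos^2\phi)$ in absolute value.
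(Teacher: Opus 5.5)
Your verification is correct: the focal-distance identities $|x|=\tfrac12(\cosh\rho+\cos\phi)$ and $|x-e|=\tfrac12(\cosh\rho-\cos\phi)$, the mutual orthogonality of $x_\rho,x_\theta,x_\phi$, and the scale factors $h_\rho=h_\phi=\tfrac12\sqrt{\cosh^2\rho-\cos^2\phi}$, $h_\theta=\tfrac12\sinh\rho\sin\phi$ all check out and give exactly \eqref{e14} and \eqref{e15}. One small imprecision: in $x_\rho\cdot x_\theta$ and $x_\theta\cdot x_\phi$ the cancelling terms carry a factor $\sin\theta\cos\theta$ rather than $\sinh\rho\cosh\rho\sin\phi\cos\phi$, but they cancel all the same. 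The paper gives no proof of its own and simply imports the lemma from \cite[Lemma~3.3]{w1}, so your direct computation is the standard self-contained justification; the focal-distance identities from your Step 1 are also exactly the ones the paper later relies on in the proof of Theorem~\ref{thm3}.
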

We now proceed with the proofs of the main Theorems of this article. We will do this in the next two subsections. 
\subsection{Proof of Theorems \ref{thm1} and \ref{thm3}}\label{Sec; proofs comparable results}
 In this subsection, we present the proofs of Theorems \ref{thm1} and \ref{thm3}, both of which are stated under the comparability assumption on the matrix-valued coefficients. 
\subsubsection{Proof of Theorem~\ref{thm1}}
Denote  $\mathfrak{P}(x):= \mathfrak{P}^{(1)}(x) - \mathfrak{P}^{(2)}(x) $ and $ \overrightarrow{U}(t,x) := \overrightarrow{U}^{(1)}(t,x) - \overrightarrow{U}^{(2)}(t,x)$
where $\mathfrak{P}^{(k)}$ and $\overrightarrow{U}^{(k)}$ for $k=1,2$, are same as in statement of Theorem \ref{thm1}. Now under the hypothesis of Theorem \ref{thm1}, we have that  $\overrightarrow{U}(t,0)=0$, for each $0\leq t\leq T$ and $\overrightarrow{U}$ also solves the following nonhomogeneous IVP
\begin{align}\label{IBVP u}
\begin{cases}
\bigl(\Box I_{2\times2}-\mathfrak{P}^{(1)}(x)\bigr)\overrightarrow{U}(t,x)
= \mathfrak{P}(x)\overrightarrow{U}^{(2)}(t,x), & (t,x)\in\mathbb{R}\times\mathbb{R}^3,\\[2mm]
\overrightarrow{U}(t,x)=\overrightarrow{0},
& (t,x)\in(-\infty, 0)\times\mathbb{R}^3.
\end{cases}
\end{align}
Let $\overrightarrow{W}$ denote the solution of the following adjoint problem
\begin{align}\label{e8}
\begin{cases}
\bigl(\Box I_{2\times2}-[\mathfrak{P}^{(1)}(x)]^{t}\bigr)\overrightarrow{W}(t,x)
= \delta(t,x)\overrightarrow{\mathscr{A}}\,, & (t,x)\in\mathbb{R}\times\mathbb{R}^3,\\[2mm]
\overrightarrow{W}(t,x)=\overrightarrow{0},
& (t,x)\in(-\infty, 0)\times\mathbb{R}^3.
\end{cases}
\end{align}  
Multiplying the governing equation for $\overrightarrow{U}$ in the IVP \eqref{IBVP u} by $\overrightarrow{W}(2\tau-t,x)$, $\tau\in \mathbb{R}$, and integrating over $\mathbb{R}\times\mathbb{R}^3$, we obtain
\begin{align}\label{e7}
\int_{\mathbb{R}}\int_{\mathbb{R}^3}
\bigl(\Box I_{2\times2}-\mathfrak{P}^{(1)}(x)\bigr)\overrightarrow{U}(t,x)\cdot
\overrightarrow{W}(2\tau-t,x)\,dx\,dt
=
\int_{\mathbb{R}}\int_{\mathbb{R}^3}
\left[\mathfrak{P}(x)\overrightarrow{U}^{(2)}(t,x)\right]\cdot
\overrightarrow{W}(2\tau-t,x)\,dx\,dt .
\end{align}
Using integration by parts, together with the vanishing initial conditions, and the fact that
$\overrightarrow{U}(t,x)=
\overrightarrow{W}(t,x)=\overrightarrow{0}$ for $t<|x|$, we obtain the following simplified form of  \eqref{e7}
\begin{align*}
\int_{\mathbb{R}}\int_{\mathbb{R}^3}
\overrightarrow{U}(t,x)\cdot
\bigl(\Box I_{2\times2}-[\mathfrak{P}^{(1)}(x)]^{t}\bigr)
\overrightarrow{W}(2\tau-t,x)\,dx\,dt
&=
\int_{\mathbb{R}}\int_{\mathbb{R}^3}
\overrightarrow{U}(t,x)\cdot
\overrightarrow{\mathscr{A}}\,\delta(2\tau-t,x)\,dx\,dt\\
&= U_1(2\tau,0)+U_2(2\tau,0).
\end{align*}
Now using the fact that   $\overrightarrow{U}(2\tau,0)=\overrightarrow{0}$, for each $\tau\in\left[0,\dfrac{T}{2}\right]$, therefore  we conclude that    
 the left-hand side of the Equation
\eqref{e7} is equal to zero.  Thus, we get  
\begin{align}\label{e9}
\int_{\mathbb{R}}\int_{\mathbb{R}^3}
\bigl(\Box I_{2\times2}-\mathfrak{P}^{(1)}(x)\bigr)\overrightarrow{U}(t,x)\cdot
\overrightarrow{W}(2\tau-t,x)\,dx\,dt
=0,
\quad \mbox{for any}\ \tau\in\left[0,\frac{T}{2}\right].
\end{align}
Using this in Equation \eqref{e7}, we obtain
\begin{align*}
0
&=\int_{\mathbb{R}}\int_{\mathbb{R}^3}
\bigl[\mathfrak{P}(x)\overrightarrow{U}^{(2)}(t,x)\bigr]\cdot
\overrightarrow{W}(2\tau-t,x)\,dx\,dt, \ \mbox{for any}\ \tau\in\left[0,\frac{T}{2}\right].
\end{align*}
Substitute the explicit representations of $\overrightarrow{U}^{(2)}$ and
$\overrightarrow{W}$ similar to that of  established in Theorem \ref{THM1 FS origin}, to obtain 
\begin{align*}
0
&=\int_{\mathbb{R}}\int_{\mathbb{R}^3}
\left(\begin{bmatrix}
\beta_{11}(x) & \beta_{12}(x)\\
\beta_{21}(x) & \beta_{22}(x)
\end{bmatrix}
\begin{bmatrix}
\dfrac{\delta(t-|x|)}{4\pi|x|}\\[2mm]
\dfrac{\delta(t-|x|)}{4\pi|x|}
\end{bmatrix}\right)
\cdot
\begin{bmatrix}
\dfrac{\delta(2\tau-t-|x|)}{4\pi|x|}\\[2mm]
\dfrac{\delta(2\tau-t-|x|)}{4\pi|x|}
\end{bmatrix}
\,dx\,dt \\
&\quad
+\int_{\mathbb{R}}\int_{\mathbb{R}^3}
\left(\begin{bmatrix}
\beta_{11}(x) & \beta_{12}(x)\\
\beta_{21}(x) & \beta_{22}(x)
\end{bmatrix}
\begin{bmatrix}
\dfrac{\delta(t-|x|)}{4\pi|x|}\\[2mm]
\dfrac{\delta(t-|x|)}{4\pi|x|}
\end{bmatrix}\right)
\cdot
\begin{bmatrix}
R^{w}_1(2\tau-t,x)\\
R^{w}_2(2\tau-t,x)
\end{bmatrix}
\,dx\,dt \\
&\quad
+\int_{\mathbb{R}}\int_{\mathbb{R}^3}
\left(\begin{bmatrix}
\beta_{11}(x) & \beta_{12}(x)\\
\beta_{21}(x) & \beta_{22}(x)
\end{bmatrix}
\begin{bmatrix}
R_1^{u^{(2)}}(t,x)\\
R_2^{u^{(2)}}(t,x)
\end{bmatrix}
\right)
\cdot
\begin{bmatrix}
\dfrac{\delta(2\tau-t-|x|)}{4\pi|x|}\\[2mm]
\dfrac{\delta(2\tau-t-|x|)}{4\pi|x|}
\end{bmatrix}
\,dx\,dt \\
&\quad
+\int_{\mathbb{R}}\int_{\mathbb{R}^3}
\left(\begin{bmatrix}
\beta_{11}(x) & \beta_{12}(x)\\
\beta_{21}(x) & \beta_{22}(x)
\end{bmatrix}
\begin{bmatrix}
R_1^{u^{(2)}}(t,x)\\
R_2^{u^{(2)}}(t,x)
\end{bmatrix}
\right)
\cdot
\begin{bmatrix}
R^{w}_1(2\tau-t,x)\\
R^{w}_2(2\tau-t,x)
\end{bmatrix}
\,dx\,dt .
\end{align*}
Using the fact that $\overrightarrow{R}^{{u}^{(2)}} = \overrightarrow{0}$ and  $\overrightarrow{R}^{w} = \overrightarrow{0}$ 
 for $t<|x|$, together with the following standard identity
\begin{align*}
  \int_{\mathbb{R}^n}\Psi(x)\,\delta(L(x))\,dx
=\int_{L(x)=0}\frac{\Psi(x)}{|\nabla_x L(x)|}\,dS_x,  
\end{align*}
where $dS_x$ denotes the surface measure on the level set $L(x)=0$,  the above expression reduces to
\begin{align*}
0
&=\frac{1}{16\pi^2}
\int_{|x|=\tau}
\frac{1}{|x|^2|\nabla_x(2\tau-2|x|)|}
\left(\begin{bmatrix}
\beta_{11}(x) & \beta_{12}(x)\\
\beta_{21}(x) & \beta_{22}(x)
\end{bmatrix}
\begin{bmatrix}
1\\
1
\end{bmatrix}\right)
\cdot
\begin{bmatrix}
1\\
1
\end{bmatrix}
\,dS_x \\
&\quad
+\frac{1}{4\pi}\int_{|x|\le\tau}
\frac{1}{|x|}
\left(\begin{bmatrix}
\beta_{11}(x) & \beta_{12}(x)\\
\beta_{21}(x) & \beta_{22}(x)
\end{bmatrix}
\begin{bmatrix}
1\\
1
\end{bmatrix}\right)
\cdot
\begin{bmatrix}
R^{w}_1(2\tau-|x|,x)\\
R^{w}_2(2\tau-|x|,x)
\end{bmatrix}
\,dx \\
&\quad
+\frac{1}{4\pi}\int_{|x|\le\tau}
\frac{1}{|x|}
\left(\begin{bmatrix}
\beta_{11}(x) & \beta_{12}(x)\\
\beta_{21}(x) & \beta_{22}(x)
\end{bmatrix}
\begin{bmatrix}
R^{u^{(2)}}_1(2\tau-|x|,x)\\
R^{u^{(2)}}_2(2\tau-|x|,x)
\end{bmatrix}\right)
\cdot
\begin{bmatrix}
1\\
1
\end{bmatrix}
\,dx \\
&\quad
+\int_{|x|\le\tau}\int_{|x|}^{2\tau-|x|}
\left(\begin{bmatrix}
\beta_{11}(x) & \beta_{12}(x)\\
\beta_{21}(x) & \beta_{22}(x)
\end{bmatrix}
\begin{bmatrix}
R^{u^{(2)}}_1(t,x)\\
R^{u^{(2)}}_2(t,x)
\end{bmatrix}\right)
\cdot
\begin{bmatrix}
R^{w}_1(2\tau-t,x)\\
R^{w}_2(2\tau-t,x)
\end{bmatrix}
\,dt\,dx.
\end{align*}
Since $\tau\in\left[0,\dfrac{T}{2}\right]$, and both $\overrightarrow{R}^{{u}^{(2)}}$ and
$\overrightarrow{R^{w}}$ are bounded on a compact subset, the above identity yields the estimate
\begin{align}\label{e10}
\int_{|x|=\tau}\frac{1}{|x|^2}
\left( \sum\limits_{i,j = 1}^{2}\beta_{ij}(x) \right)\,dS_x
\le
K\int_{|x|\le\tau}\frac{1}{|x|}
\left( \sum\limits_{i,j = 1}^{2}\beta_{ij}(x) \right)\,dx ,\ \mbox{for}\ \tau\in \left[0,\frac{T}{2}\right]
\end{align}
and  some constant $K>0$. In the above inequality, we have used that $|\nabla_x(\tau-|x|)| = 1$. Now, define\begin{align}\label{e11}
I(\tau):=\int_{|x|=\tau}\frac{1}{|x|^2}
\left( \sum\limits_{i,j = 1}^{2}\beta_{ij}(x) \right)\,dS_x .
\end{align} 
and using this in Equation \eqref{e10} along with polar coordinates,  to arrive at 
\begin{align*}
I(\tau)
&\le
K\int_{|x|\le\tau}\frac{1}{|x|}
\left( \sum\limits_{i,j = 1}^{2}\beta_{ij}(x) \right)\,dx
\le
KC\int_{0}^{\tau} I(r)\,dr, \ \mbox{for any}\ \tau\in\left[0,\frac{T}{2}\right].
\end{align*}
Now since $\beta_{ij}\geq 0$, for each $1\leq i,j\leq 2$, therefore we have $I(\tau)\geq 0$, for any $0\leq \tau\leq \frac{T}{2}$, hence after applying the  Gr\"onwall’s inequality, we conclude 
\begin{align*}
 I(\tau)=0, \quad  \mbox{for any}\ \tau\in\left[0,\dfrac{T}{2}\right].   
\end{align*}
Finally, since each entry $\beta_{ij}(x)$ is non-negative, Equation \eqref{e11} implies
\begin{align*}
   \beta_{ij}(x)=0,\quad \mbox{for any}\  x\in \mathbb{R}^3 \ \mbox{such that}\  |x|\le \frac{T}{2}, \quad 1\le i,j\le2. 
\end{align*}
Therefore, we have
\begin{align*}
   \mathfrak{P}^{(1)}(x)=\mathfrak{P}^{(2)}(x), \quad \mbox{for any}\  x\in \mathbb{R}^3 \ \mbox{such that} \ |x|\le \frac{T}{2}. 
\end{align*}
This concludes the proof.\qed

\subsubsection{Proof of Theorem \ref{thm3}}
Again denote  $\mathfrak{P}(x):= \mathfrak{P}^{(1)}(x) - \mathfrak{P}^{(2)}(x) $ and $ \overrightarrow{U}(t,x) := \overrightarrow{U}^{(1)}(t,x) - \overrightarrow{U}^{(2)}(t,x)$
where $\mathfrak{P}^{(k)}$ and $\overrightarrow{U}^{(k)}$ for $k=1,2$, are same as in statement of Theorem \ref{thm3}. Now under the hypothesis of Theorem \ref{thm3}, we have that  $\overrightarrow{U}(t,e)=0$, for each $0\leq t\leq T$ and $\overrightarrow{U}$ also solves the following nonhomogeneous IVP
\begin{align}\label{IVP-u}
\begin{cases}
    \left(\Box I_{2\times2} -  \mathfrak{P}^{(1)}(x)\right)\overrightarrow{U}(t,x)  =
 \beta(x)\overrightarrow{U}^{(2)}(t,x),\quad&(t,x)\in \mathbb{R} \times\mathbb{R}^3, \\
       \overrightarrow{U}(t,x)  =  \overrightarrow{0},  &(t,x)\in (-\infty, 0)\times\mathbb{R}^3.
\end{cases}
\end{align}
Assume that $\overrightarrow{W}$ is a solution of the following IVP for the adjoint equation
\begin{align}\label{e19}
    \begin{cases}
         \left(\Box I_{2\times2} -  [\mathfrak{P}^{(1)}(x)]^{t}\right)\overrightarrow{W}(t,x)  = \overrightarrow{\mathscr{A}}{\delta}(t,x - e),\quad&(t,x)\in \mathbb{R} \times\mathbb{R}^3, \\
       \overrightarrow{W}(t,x)  = \overrightarrow{0},  &(t,x)\in (-\infty, 0)\times\mathbb{R}^3.  
    \end{cases} 
  \end{align} 
Now, multiplying the governing equation for $\overrightarrow{U}$ in the IVP \eqref{IVP-u} by $\overrightarrow{W}(2\tau-t,x)$, $\tau\in \mathbb{R}$, and integrating over $\mathbb{R}\times\mathbb{R}^3$, we obtain 
\begin{align}\label{e18}
\int\limits_{\mathbb{R}}\int\limits_{\mathbb{R}^3}  \left(\left(\Box I_{2\times2} -  \mathfrak{P}^{(1)}(x)\right)\overrightarrow{U}(t,x)\right)\cdot &\overrightarrow{W}(2\tau-t,x) \,dx \,dt \nonumber\\ &= \int\limits_{\mathbb{R}}\int\limits_{\mathbb{R}^3} \left(\mathfrak{P}(x)\overrightarrow{U}^{(2)}(t,x)\right)\cdot \overrightarrow{W}(2\tau-t,x)\,dx \,dt, 
\end{align} 
By applying the integration by parts, along with the vanishing initial conditions, and the fact that 
$\overrightarrow{U}(t,x)=\overrightarrow{0}$ for $t<|x|$ and
$\overrightarrow{W}(t,x)=\overrightarrow{0}$ for $t<|x-e|$, we obtain the following simplified form of the left-hand side of Equation \eqref{e18}  
\begin{align*}
\int\limits_{\mathbb{R}}\int\limits_{\mathbb{R}^3}  \left((\Box I_{2\times2} -  \mathfrak{P}^{(1)}(x))\overrightarrow{U}(t,x)\right)\cdot &\overrightarrow{W}(2\tau-t,x) \,dx \,dt \nonumber\\ &= \int\limits_{\mathbb{R}}\int\limits_{\mathbb{R}^3} \overrightarrow{U}(t,x)\cdot\left(\Box I_{2\times2} -  [\mathfrak{P}^{(1)}(x)]^{t}\right)  \overrightarrow{W}(2\tau-t,x) \,dx \,dt, \\
    &= \int\limits_{\mathbb{R}}\int\limits_{\mathbb{R}^3} \overrightarrow{U}(t,x)\cdot \overrightarrow{\mathscr{A}}{\delta}(2\tau-t,x-e) \,dx \,dt,\\
    &=  u(2\tau,e).
\end{align*} 
Hypothesis of Theorem \ref{thm3}, gives us   $\overrightarrow{U}(2\tau,e) = \overrightarrow{0}$, for each $\tau\in \left[0,\dfrac{T}{2}\right]$, hence  we obtain
\begin{align*}
     \int\limits_{\mathbb{R}}\int\limits_{\mathbb{R}^3}  \left(\left(\Box I_{2\times2} -  \mathfrak{P}^{(1)}(x)\right)\overrightarrow{U}(t,x)\right)\cdot\overrightarrow{W}(2\tau-t,x) \,dx \,dt = 0.
\end{align*}
After using the above equation  along with the  expressions of solutions $\overrightarrow{U}^{(2)}$ from Theorem \ref{THM1 FS origin} and $\overrightarrow{W}$ from Theorem \ref{THM1 FS non origin},  we get
\begin{align*}
    0 
    &= \int\limits_{\mathbb{R}}\int\limits_{\mathbb{R}^3}
 \left( \begin{bmatrix}
        \beta_{11}(x) & \beta_{12}(x)\\
        \beta_{21}(x) & \beta_{22}(x)
    \end{bmatrix}   
    \begin{bmatrix}
        \dfrac{\delta(t-|x|)}{4\pi|x|}\\
        \dfrac{\delta(t-|x|)}{4\pi|x|}
    \end{bmatrix} \right)
    \cdot \begin{bmatrix}
         \dfrac{\delta(2\tau-t-|x-e|)}{4\pi|x-e|}\\
        \dfrac{\delta(2\tau- t-|x-e|)}{4\pi|x-e|}
    \end{bmatrix}\,dx \,dt \\
    & \quad  + \int\limits_{\mathbb{R}}\int\limits_{\mathbb{R}^3}
  \left(\begin{bmatrix}
        \beta_{11}(x) & \beta_{12}(x)\\
        \beta_{21}(x) & \beta_{22}(x)
    \end{bmatrix}   
    \begin{bmatrix}
        \dfrac{\delta(t-|x|)}{4\pi|x|}\\
        \dfrac{\delta(t-|x|)}{4\pi|x|}
    \end{bmatrix} \right)
    \cdot \begin{bmatrix}
         R^{w}_1(2\tau - t,x)\\
         R^{w}_2(2\tau -t,x )
    \end{bmatrix} \,dx \,dt\\
    & \quad  + \int\limits_{\mathbb{R}}\int\limits_{\mathbb{R}^3}
 \left( \begin{bmatrix}
        \beta_{11}(x) & \beta_{12}(x)\\
        \beta_{21}(x) & \beta_{22}(x)
    \end{bmatrix}   
    \begin{bmatrix}
      R^{u^{(2)}}_1(2\tau - t,x)\\
         R^{u^{(2)}}_2(2\tau -t,x )
    \end{bmatrix} \right)
    \cdot \begin{bmatrix}
        \dfrac{\delta(2\tau-t-|x-e|)}{4\pi|x-e|}\\
        \dfrac{\delta(2\tau- t-|x-e|)}{4\pi|x-e|}
    \end{bmatrix} \,dx \,dt\\
    & \quad  + \int\limits_{\mathbb{R}}\int\limits_{\mathbb{R}^3}
 \left( \begin{bmatrix}
        \beta_{11}(x) & \beta_{12}(x)\\
        \beta_{21}(x) & \beta_{22}(x)
    \end{bmatrix}   
    \begin{bmatrix}
        R^{u^{(2)}}_1(2\tau - t,x)\\
         R^{u^{(2)}}_2(2\tau -t,x )
    \end{bmatrix} \right)
    \cdot \begin{bmatrix}
         R^{w}_1(2\tau - t,x)\\
         R^{w}_2(2\tau -t,x )
    \end{bmatrix}\,dx \,dt. 
\end{align*}
Now, using the fact that $\overrightarrow{R}^{u^{(2)}} = \overrightarrow{0}$ for $t< |x|$, and  $\overrightarrow{R}^{w} = \overrightarrow{0}$ for $t< |x-e|$, together with the following standard identity
\begin{align*}
  \int_{\mathbb{R}^n}\Psi(x)\,\delta(L(x))\,dx
=\int_{L(x)=0}\frac{\Psi(x)}{|\nabla_x L(x)|}\,dS_x,  
\end{align*}
where $\,dS_x$ stands for the surface measure on the surface $L=0$, we get
\begin{align*}
  \quad \quad \quad  0  &= \frac{1}{16\pi^2}\int\limits_{|x-e|+|x| = 2\tau}
  \frac{1}{|x||x-e||\nabla_x(2\tau -|x|-|x-e|)|}
  \left(\begin{bmatrix}
        \beta_{11}(x) & \beta_{12}(x)\\
        \beta_{21}(x) & \beta_{22}(x)
    \end{bmatrix}   
    \begin{bmatrix}
        1 \\
        1
    \end{bmatrix}\right) 
    \cdot \begin{bmatrix}
        1 \\
        1
    \end{bmatrix}\,dS_x  \\
    & \quad  + \frac{1}{4\pi} \int\limits_{|x-e|+|x| \leq 2\tau} \frac{1}{|x|}
  \left(\begin{bmatrix}
        \beta_{11}(x) & \beta_{12}(x)\\
        \beta_{21}(x) & \beta_{22}(x)
    \end{bmatrix}   
    \begin{bmatrix}
        1 \\
        1
    \end{bmatrix} \right)
    \cdot \begin{bmatrix}
         R^{w}_1(2\tau - |x|,x)\\
         R^{w}_2(2\tau -|x|,x )
    \end{bmatrix} \,dx\\
    & \quad  + \frac{1}{4\pi}\int\limits_{|x-e|+|x| \leq 2\tau} \frac{1}{|x-e|}
  \left(\begin{bmatrix}
        \beta_{11}(x) & \beta_{12}(x)\\
        \beta_{21}(x) & \beta_{22}(x)
    \end{bmatrix}   
    \begin{bmatrix}
      R^{u^{(2)}}_1(2\tau - |x-e|,x)\\
         R^{u^{(2)}}_2(2\tau -|x-e|,x )
    \end{bmatrix} \right)
    \cdot \begin{bmatrix}
       1 \\
       1
    \end{bmatrix} \,dx \\
    & \quad  + \int\limits_{|x-e|+|x| \leq2\tau}\int\limits_{|x|}^{2\tau - |x-e|}
 \left( \begin{bmatrix}
        \beta_{11}(x) & \beta_{12}(x)\\
        \beta_{21}(x) & \beta_{22}(x)
    \end{bmatrix}   
    \begin{bmatrix}
        R^{u^{(2)}}_1(t,x)\\
         R^{u^{(2)}}_2(t,x)
    \end{bmatrix} \right)
    \cdot \begin{bmatrix}
         R^{w}_1(2\tau - t,x)\\
         R^{w}_2(2\tau -t,x )
    \end{bmatrix}  \,dt \,dx.
\end{align*}
After simplification, and using
\begin{align*}
    \lvert \nabla_x\left( 2\tau-|x|-|x-e| \right)\rvert &= \left| \frac{x}{\lvert x \rvert} + \frac{x-e}{\lvert x-e \rvert} \right|\\ 
    &= \left| \frac{|x-e|x +(x-e)|x|}{|x||x-e|} \right|\\
    &= \left| \frac{2\tau x - e|x|}{|x||x-e|} \right|
\end{align*}
together with the fact that  $\tau \in \left[0,\dfrac{T}{2}\right]$ with $1<T<\infty $ and the boundedness of $\overrightarrow{R^{w}}$ and $\overrightarrow{R}^{{u}^{(2)}}$ on compact subsets, we have 
\begin{align}\label{e21}
  \int\limits_{|x-e|+|x| = 2\tau} \frac{1}{\left|2\tau x - e|x|\right|}\left( \sum\limits_{i,j = 1}^{2}\beta_{ij}(x) \right)\,dS_x 
  \leq K \int\limits_{|x-e|+|x|  \leq 2\tau}  \frac{1}{|x-e||x|}\left( \sum\limits_{i,j = 1}^{2}\beta_{ij}(x) \right) \,dx
\end{align}
holds for each $\tau\in \left[0,\frac{T}{2}\right]$.
Now denote 
\begin{align}\label{e22}
    I(2\tau) := \int\limits_{|x-e|+|x| = 2\tau} \frac{1}{\left|2\tau x - e|x|\right|}\left( \sum\limits_{i,j = 1}^{2}\beta_{ij}(x) \right)\,dS_x.
\end{align}
By using the the prolate-spheroidal co-ordinates from Equation  \eqref{e13} and value of $\cosh{\rho} = 2\tau$, we get
\begin{align*}
    |x|= \left(x_1^2+x_2^2+x_3^2\right)^{1/2} = \frac{1}{2}\left(\cos^2{\phi}+\cosh^2{\rho}+2\cosh{\rho} \cos{\phi}\right)^{1/2} = \frac{1}{2}\left(2\tau + \cos{\phi} \right)
\end{align*} and 
\begin{align}\label{eq; simplify term}
    {\left|2\tau x - e|x|\right|} =|\left( 2\tau x_1 - |x|, 2\tau x_2, 2\tau x_3\right)| = \frac{1}{2}\sqrt{(4\tau^2 - \cos^2{\phi})(4\tau^2 - 1)}.
\end{align}
Combining the above expression together with the parametrizations for the surface measure given  by Equation \eqref{e14} in the integral \eqref{e22}, we obtain
\begin{align*}
    I(2\tau) = \frac{1}{8} \int\limits_{0}^{\pi}\int\limits_{0}^{2\pi} \sum\limits_{i,j = 1}^{2}\beta_{ij}(\rho, \theta, \phi)\left[\frac{\sinh{\rho}\sin{\phi}\sqrt{\cosh^{2}{\rho}-\cos^{2}{\phi}}}{\sqrt{(4\tau^2 - \cos^2{\phi})(4\tau^2 - 1)}}\right]\,d\theta \,d\phi,
\end{align*}
where 
\begin{align*}
    \beta_{ij}(\rho, \theta, \phi) = \beta_{ij}\left(   \frac{1}{2} + \frac{1}{2}\cosh{\rho}\cos{\phi}, \frac{1}{2} \sinh{\rho}\sin{\theta}\sin{\phi}, \frac{1}{2} \sinh{\rho}\cos{\theta}\sin{\phi} \right).
\end{align*}
After substituting the value $\cosh{\rho} = 2\tau$, $\sinh{\rho} = \sqrt{4\tau^{2}-1}$ and $\rho = \sinh^{-1}{\left(4\tau^{2}-1\right)} = \ln\left({2\tau + \sqrt{4\tau^{2}-1}}\right)$ in the above integral, we have
\begin{align}\label{e23}
     I(2\tau) =  \int\limits_{0}^{\pi}\int\limits_{0}^{2\pi} \sum\limits_{i,j = 1}^{2}\beta_{ij}\left(\ln{\left(2\tau + \sqrt{4\tau^{2}-1}\right)}, \theta, \phi\right)\sin{\phi}\,d\theta \,d\phi.
\end{align}
After substituting
$|x|=\tfrac{1}{2}(2\tau+\cos\phi)$, 
$|x-e|=\tfrac{1}{2}(2\tau-\cos\phi),$
and using Equations \eqref{e13} and   \eqref{e15} in the  volume integral appearing in the right-hand side of Equation \eqref{e21}, we get 
\begin{align*}
    \int\limits_{|x-e|+|x|\leq 2\tau}  \frac{1}{|x-e||x|}\left( \sum\limits_{i,j = 1}^{2}\beta_{ij}(x) \right) \,dx= \frac{1}{2}\int\limits_{\cosh{\rho}\leq 2\tau}\int\limits_{0}^{\pi}\int\limits_{0}^{2\pi} \sum\limits_{i,j = 1}^{2}\beta_{ij}(\rho, \theta, \phi)\sinh{\rho}\sin{\phi}\,d\theta\,d\phi\,d\rho.
\end{align*}
 Now use  $\cosh{\rho} = r$ and $ \rho = \ln\left( r + \sqrt{r^{2} - 1} \right)$ in the above expression, to arrive at  
\begin{align*}
    \int\limits_{|x-e|+|x|\leq 2\tau}  \frac{1}{|x-e||x|}\left( \sum\limits_{i,j = 1}^{2}\beta_{ij}(x) \right) \,dx = \frac{1}{2}\int\limits_{1}^{2\tau}\int\limits_{0}^{\pi}\int\limits_{0}^{2\pi} \sum\limits_{i,j = 1}^{2}\beta_{ij}( \ln\left( r + \sqrt{r^{2} - 1} \right), \theta, \phi)\sin{\phi}\,d\theta\,d\phi\,dr.
\end{align*}
Finally,  using Equation  \eqref{e23}, we get 
\begin{align*}
    \int\limits_{|x-e|+|x|\leq 2\tau}  \frac{1}{|x-e||x|}\left( \sum\limits_{i,j = 1}^{2}\beta_{ij}(x) \right) \,dx \leq C \int\limits_{1}^{2\tau} I(r)\,dr.
\end{align*}
Using the above inequality in the  estimate \eqref{e21} and the non-negativity of the coefficients
$\beta_{ij}(x)$, $1\le i,j\le 2$, we deduce
\begin{align*}
     I(2\tau)  \leq KC \int\limits_{1}^{2\tau} I(r)\,dr.
\end{align*}
Using Gr\"onwall's inequality, we have
\begin{align*}
    I(2\tau) = 0, ~\text{for}~ \tau \in \left[\frac{1}{2},\frac{T}{2}\right].
\end{align*}
From Equation \eqref{e22}, and using $\beta_{ij}(x)\geq 0$ for $1\leq i,j\leq 2$, we have $\beta_{ij}(x) = 0$, $1\leq i,j\leq 2$, and for each $x\in \mathbb{R}^3$.
Hence we have $\mathfrak{P}^{(1)}(x) = \mathfrak{P}^{(2)}(x) $ for each $x\in \mathbb{R}^3$ satisfying the condition $|x| + |x-e| \leq T$. This concludes the proof. \qed

\subsection{Proof of Theorems \ref{thm2} and \ref{thm4}}\label{Sec; proofs radial results}
In this subsection, we provide the proof of Theorem \ref{thm2} and Theorem \ref{thm4}, both of which are established under the symmetry assumptions on
the components of the matrices.

\subsubsection{Proof of Theorem~(\ref{thm2})}

Let's start by taking $\mathfrak{P}^{(k)}\in \mathcal{A}_1$, $k =1,2$.  Then the difference matrix $\mathfrak{P}(x) := \mathfrak{P}^{(1)}(x) - \mathfrak{P}^{(2)}(x)$ has the following form
\begin{align*}
  \mathfrak{P}(x)=
\begin{bmatrix}
\beta(x) & \beta(x)\\
\beta(x) & \beta(x)
\end{bmatrix}, 
\quad \mbox{where} \ \  \beta(x)=b(|x|),  
\end{align*}
where $b$ is a radial function. Consequently, we have
\begin{align*}
 \sum\limits_{i,j = 1}^{2}\beta_{ij}(x) =4b(|x|).  
\end{align*}
We begin by examining the surface integral appearing in \eqref{e11}, which we denote by
\begin{align*}
 I(\tau):=\int_{|x|=\tau}\frac{1}{|x|^{2}}
\left( \sum\limits_{i,j = 1}^{2}\beta_{ij}(x) \right)\,dS_x .  
\end{align*}
Using the spherical coordinates, we compute
\begin{align}\label{eq41}
I(\tau)
=4\int_{0}^{\pi}\int_{0}^{2\pi} b(\tau)\sin\phi\,d\theta\,d\phi
=8\pi\,b(\tau).
\end{align}
Next, consider the volume integral
\begin{align*}
    \int_{|x|\le \tau}\frac{1}{|x|}
\sum_{i,j=1}^{2}\beta_{ij}(x)\,dx.
\end{align*}
Using the radial structure of $\beta$ and spherical coordinates again, we obtain the following expression
\begin{align*}
\int_{|x|\le \tau}\frac{4\beta(x)}{|x|}\,dx
=4\int_{0}^{\tau}\int_{0}^{\pi}\int_{0}^{2\pi}
r\,b(r)\sin\phi\,d\theta\,d\phi\,dr\le C\int_{1}^{\tau}|b(r)|\,dr,
\end{align*}
where $C$ is some positive constant which is independent of $\tau$.

\noindent Combining this estimate with integral \eqref{eq41}, we arrive at the inequality
\begin{align}\label{estimate for b}
  |b(\tau)|\le C\int_{1}^{\tau}|b(r)|\,dr.  
\end{align}
Using the Gr\"onwall’s inequality to estimate \eqref{estimate for b}, we conclude that
\begin{align*}
  b(\tau)=0,
\qquad \tau\in\left[0,\frac{T}{2}\right].  
\end{align*}
Hence, the radial function $b$ vanishes on the interval $\left[0,\frac{T}{2}\right]$, which yields
\begin{align*}
  \mathfrak{P}(x)=0, \qquad \text{for each } x\in\mathbb{R}^3 \text{ with } |x|\le \frac{T}{2}.  
\end{align*}
We now consider the cases in which $\mathfrak{P}^{(k)}\in\mathcal{A}_2$, $k=1,2$.
In this case, the matrix-valued potential $\mathfrak{P}$ takes the form
\begin{align*}
  \mathfrak{P}(x)=
\begin{bmatrix}
\beta(x) & 0\\
0 & \beta(x)
\end{bmatrix},
\qquad \beta(x)=b(|x|). 
\end{align*}
Similarly, when $\mathfrak{P}^{(k)}\in\mathcal{A}_3$, $k=1,2$, the potential $\mathfrak{P}$ is given by
\begin{align*}
  \mathfrak{P}(x)=
\begin{bmatrix}
0 & \beta(x)\\
\beta(x) & 0
\end{bmatrix},
\qquad \beta(x)=b(|x|). 
\end{align*}
In both cases, an argument analogous to the one presented above yields the recovery of
the matrix potential $\mathfrak{P}$.
This concludes the proof.\qed

\subsubsection{Proof of Theorem~ (\ref{thm4})}

We begin by considering the surface integral appearing in Equation \eqref{e22}, which we denote by $I(2\tau)$. It is given by
\begin{align*}
    I(2\tau) := \int\limits_{|x-e|+|x| = 2\tau} \frac{1}{\left|2\tau x - e|x|\right|}\sum_{i,j=1}^{2} \beta_{ij}(x)\,dS_x.
\end{align*}
Let's start by taking $\mathfrak{P}^{(k)}\in \mathcal{A}_1$, $k =1,2$.  Then the difference matrix $\mathfrak{P}(x) := \mathfrak{P}^{(1)}(x) - \mathfrak{P}^{(2)}(x)$ has the following form
\begin{align*}
\mathfrak{P}(x) =
   \begin{bmatrix}
        \beta(x) & \beta(x)\\
        \beta(x) & \beta(x)
    \end{bmatrix}, \qquad \beta(x) = b(|x| +|e-x|), 
\end{align*}
 where $b$ satisfies the ellipsoidal symmetry, thus the above integrand depends only on the parameter $|x|+|x-e|$. Consequently, on the surface
$\{|x|+|x-e|=2\tau\}$, we have $ \beta(x) = b(2\tau)$. Using prolate spheroidal coordinates \eqref{e13}, together with
the standard surface measure \eqref{e14}   and Equation \eqref{eq; simplify term}, we obtain
\begin{align}\label{e41}
     I(2\tau) =  \int\limits_{0}^{\pi}\int\limits_{0}^{2\pi} b(2\tau)\sin{\phi}\,d\theta \,d\phi = 8\pi b(2\tau).
\end{align}
Next, we estimate the corresponding volume integral
\begin{align*}
    \int\limits_{|x-e|+|x|\leq 2\tau}  \frac{1}{|x-e||x|}\left( \sum\limits_{i,j = 1}^{2}\beta_{ij}(x) \right) \,dx. 
\end{align*}
 Again, using the prolate-spheroidal co-ordinates \eqref{e13} and volume element \eqref{e15} in the above integral, we have 
\begin{align*}
    \int\limits_{|x-e|+|x|\leq 2\tau}  \frac{1}{|x-e||x|}\left( \sum\limits_{i,j = 1}^{2}\beta_{ij}(x) \right) \,dx = \frac{1}{2}\int\limits_{\cosh{\rho}\leq 2\tau}\int\limits_{0}^{\pi}\int\limits_{0}^{2\pi} \sum\limits_{i,j = 1}^{2}\beta_{ij}(\rho, \theta, \phi)\sinh{\rho}\sin{\phi}\,d\theta\,d\phi\,d\rho.
\end{align*}
Using the representation $\beta_{ij}(x)=b_{ij}(|x|+|x-e|)$ and changing variables $\cosh{\rho} = r$ and $ \rho = \ln\left( r + \sqrt{r^{2} - 1} \right)$, we obtain
\begin{align}\label{eq; estimate with radial}
    \int\limits_{|x-e|+|x|\leq 2\tau}  \frac{4\beta(x)}{|x-e||x|} \,dx = 2\int\limits_{1}^{2\tau}\int\limits_{0}^{\pi}\int\limits_{0}^{2\pi} b(r)\sin{\phi}\,d\theta\,d\phi\,dr
    \leq C\int\limits_{1}^{2\tau}|b(r)|\,dr.
\end{align}
Combining the above estimate \eqref{eq; estimate with radial} with integral \eqref{e41}, we get the following inequality
\begin{align}\label{e42}
    |b(2\tau)| \leq C \int\limits_{1}^{2\tau}|b(r)|\,dr.
\end{align}
Using the Gr\"onwall’s inequality to the estimate \eqref{e42}, we conclude that 
\begin{align*}
    b(2\tau) = 0,~ \text{for}~ \tau \in \Big[\frac{1}{2},\frac{T}{2}\Big].
\end{align*}
Hence, it follows that $\mathfrak{P}(x)= 0$, for each $x\in \mathbb{R}^3$ satisfying the condition $|x| +|e-x| \leq T$.

We now consider the cases in which $\mathfrak{P}^{(k)}\in\mathcal{A}_2$, $k=1,2$.
In this case, the matrix-valued potential $\mathfrak{P}$ takes the form
\begin{align*}
  \mathfrak{P}(x)=
\begin{bmatrix}
\beta(x) & 0\\
0 & \beta(x)
\end{bmatrix},
\qquad \beta(x)=b(|x| +|e-x|). 
\end{align*}
Similarly, when $\mathfrak{P}^{(k)}\in\mathcal{A}_3$, $k=1,2$, the potential $\mathfrak{P}$ is given by
\begin{align*}
  \mathfrak{P}(x)=
\begin{bmatrix}
0 & \beta(x)\\
\beta(x) & 0
\end{bmatrix},
\qquad \beta(x)=b(|x| +|e-x|). 
\end{align*}
In both cases, an argument analogous to the one presented above yields the recovery of
the matrix potential $\mathfrak{P}$.
This concludes the proof.\qed

\section*{Acknowledgements}\label{sec:acknowledgements}
\begin{itemize}
      \item Rahul Bhardwaj acknowledges with gratitude the financial support received from the University Grants Commission (UGC), Government of India.
	\item Manmohan Vashisth is supported by the ISIRD project (No. 9--551/2023/IITRPR/10229) at IIT Ropar.
    \item This research also received partial support under the FIST program of the Department of Science and Technology, Government of India (Ref. No. SR/FST/MS-I/2018/22(C)).
    \item  The authors would like to sincerely thank Prof. Rakesh for insightful discussions and valuable suggestions, which significantly improved the quality of this work.\\
\end{itemize}

 \noindent \textbf{Data availability statement.} \ No datasets were generated or analyzed during the current study; therefore, data sharing is not applicable.

    \vspace{.1cm}
\noindent \textbf{Conflict of interest.} \
The authors declare that they have no conflicts of interest regarding the research, authorship, and/or publication of this article.
\bibliography{References1}

@book{Friedlander_1975,
  title={The wave equation on a curved space-time},
  author={Friedlander, F. G.},
  year={1975},
  publisher={Cambridge University Press},
  volume={2},
  series={Cambridge Monographs on Mathematical Physics}
}

@article{w1, author={M. Vashisth},
  title={An inverse problem for the wave equation with source and receiver at distinct points},
  journal={Journal of Inverse and Ill-posed Problems},
  volume={27},
  number={6},
  pages={835--843},
  year={2019}
}

@article{symes2009seismic,
  title={The seismic reflection inverse problem},
  author={Symes, W. W.},
  journal={Inverse problems},
  volume={25},
  number={12},
  pages={123008},
  year={2009},
  publisher={IOP Publishing}
}

@article {Romanov1992,
    AUTHOR = {Romanov, V. G.},
     TITLE = {On the problem of determining the coefficients in the lowest
              order terms of a hyperbolic equation},
   JOURNAL = {Sibirskij Matematicheskij Zhurnal},
  FJOURNAL = {Sibirskij Matematicheskij Zhurnal},
    VOLUME = {33},
      YEAR = {1992},
    NUMBER = {3},
     PAGES = {156--160, 220},
      ISSN = {0037-4474},
   MRCLASS = {35R30 (35L05)},
  MRNUMBER = {1177722},
MRREVIEWER = {Carlos E. Kenig},
       DOI = {10.1007/BF01201359},
       URL = {https://doi.org/10.1007/BF01201359}
}

@article {Rakesh2008,
    AUTHOR = {Rakesh},
     TITLE = {Inverse problems for the wave equation with a single
              coincident source-receiver pair},
   JOURNAL = {Inverse Problems},
  FJOURNAL = {Inverse Problems. An International Journal on the Theory and
              Practice of Inverse Problems, Inverse Methods and Computerized
              Inversion of Data},
    VOLUME = {24},
      YEAR = {2008},
    NUMBER = {1},
     PAGES = {015012, 16},
      ISSN = {0266-5611},
   MRCLASS = {35R30 (35L05)},
  MRNUMBER = {2389574},
       DOI = {10.1088/0266-5611/24/1/015012},
       URL = {https://doi.org/10.1088/0266-5611/24/1/015012}
}

@article {RakeshSacks2011,
    AUTHOR = {Rakesh and Sacks, P.},
     TITLE = {Uniqueness for a hyperbolic inverse problem with angular
              control on the coefficients},
   JOURNAL = {Journal of Inverse and Ill-Posed Problems},
  FJOURNAL = {Journal of Inverse and Ill-Posed Problems},
    VOLUME = {19},
      YEAR = {2011},
    NUMBER = {1},
     PAGES = {107--126},
      ISSN = {0928-0219},
   MRCLASS = {35R30 (35L05)},
  MRNUMBER = {2764014},
       DOI = {10.1515/JIIP.2010.059},
       URL = {https://doi.org/10.1515/JIIP.2010.059}
}

@article {Rakesh1998,
    AUTHOR = {Rakesh},
     TITLE = {Inversion of spherically symmetric potentials from boundary
              data for the wave equation},
   JOURNAL = {Inverse Problems},
  FJOURNAL = {Inverse Problems. An International Journal on the Theory and
              Practice of Inverse Problems, Inverse Methods and Computerized
              Inversion of Data},
    VOLUME = {14},
      YEAR = {1998},
    NUMBER = {4},
     PAGES = {999--1007},
      ISSN = {0266-5611},
   MRCLASS = {35R30 (35L05)},
  MRNUMBER = {1650634},
       DOI = {10.1088/0266-5611/14/4/011},
       URL = {https://doi.org/10.1088/0266-5611/14/4/011}
}

@article {Stefanov1990,
    AUTHOR = {Stefanov, P. D.},
     TITLE = {A uniqueness result for the inverse back-scattering problem},
   JOURNAL = {Inverse Problems},
  FJOURNAL = {Inverse Problems. An International Journal on the Theory and
              Practice of Inverse Problems, Inverse Methods and Computerized
              Inversion of Data},
    VOLUME = {6},
      YEAR = {1990},
    NUMBER = {6},
     PAGES = {1055--1064},
      ISSN = {0266-5611},
   MRCLASS = {35R30 (35L05)},
  MRNUMBER = {1094425},
       DOI = {10.1088/0266-5611/6/6/005},
       URL = {https://doi.org/10.1088/0266-5611/6/6/005}
}

@article{klibanov2005some,
  title={Some inverse problems with a ‘partial’ point source},
  author={Klibanov, M. V.},
  journal={Inverse problems},
  volume={21},
  number={4},
  pages={1379},
  year={2005},
  publisher={IOP Publishing}
}

@article{li2006estimation,
  title={Estimation of coefficients in a hyperbolic equation with impulsive inputs.},
  author={Li, S.},
  journal={Journal of Inverse \& Ill-Posed Problems},
  volume={14},
  number={9},
  year={2006}
}

@article{rakesh1993inverse,
  title={An inverse impedance transmission problem for the wave equation},
  author={Rakesh},
  journal={Communications in Partial Differential Equations},
  volume={18},
  number={3-4},
  pages={583--600},
  year={1993},
  publisher={Taylor \& Francis}
}

@article{rakesh2003inverse,
  title={An inverse problem for a layered medium with a point source},
  author={Rakesh},
  journal={Inverse Problems},
  volume={19},
  number={3},
  pages={497--506},
  year={2003},
  publisher={IOP PUBLISHING LTD TEMPLE CIRCUS, TEMPLE WAY, BRISTOL BS1 6BE, ENGLAND}
}

@article{sacks1996impedance,
  title={Impedance inversion from transmission data for the wave equation},
  author={Rakesh and Sacks, P.},
  journal={Wave Motion},
  volume={24},
  number={3},
  pages={263--274},
  year={1996},
  publisher={Elsevier}
}

@book{romanov2013integral,
  title={Integral geometry and inverse problems for hyperbolic equations},
  author={Romanov, V. G.},
  volume={26},
  year={2013},
  publisher={Springer Science \& Business Media}
}

@article{blaasten2017well,
  title={Well-posedness of the {G}oursat problem and stability for point source inverse backscattering},
  author={Bl{\aa}sten, E.},
  journal={Inverse Problems},
  volume={33},
  number={12},
  pages={125003},
  year={2017},
  publisher={IOP Publishing}
}

@article{ben2025stable,
  title={Stable recovery of a time dependent matrix potential for wave equation from arbitrary measurements},
  author={B. Fraj, O. and Rassas, I.},
  journal={Inverse Problems},
  year={2025}
}

@article{KRISHNAN2023622,
title = {Point sources and stability for an inverse problem for a hyperbolic {PDE} with space and time dependent coefficients},
journal = {Journal of Differential Equations},
volume = {342},
pages = {622-665},
year = {2023},
issn = {0022-0396},
doi = {https://doi.org/10.1016/j.jde.2022.10.025},
url = {https://www.sciencedirect.com/science/article/pii/S0022039622005988},
author = {V. P. Krishnan and  Rakesh and S. Senapati},
}

@article{Mishra10122021,
author = {R. K. Mishra and M. Vashisth},
title = {Determining the time-dependent matrix potential in a wave equation from partial boundary data},
journal = {Applicable Analysis},
volume = {100},
number = {16},
pages = {3492--3508},
year = {2021},
publisher = {Taylor \& Francis},
doi = {10.1080/00036811.2020.1721476},

URL = { https://doi.org/10.1080/00036811.2020.1721476},
eprint = {  https://doi.org/10.1080/00036811.2020.1721476}
}

@article{avdonin1992boundary,
  title={Boundary control and a matrix inverse problem for the equation},
  author={Avdonin, S. and Belishev, M. and Ivanov, S.},
  journal={Mathematics of the USSR-Sbornik},
  volume={72},
  number={2},
  pages={287},
  year={1992},
  publisher={IOP Publishing}
}

@article{khanfer2019inverse,
  title={Inverse problem for one-dimensional wave equation with matrix potential},
  author={Khanfer, A. and Bukhgeim, A.},
  journal={Journal of Inverse and Ill-posed Problems},
  volume={27},
  number={2},
  pages={217--223},
  year={2019},
  publisher={De Gruyter}
}

@inproceedings{eskin1997inverse,
  title={Inverse scattering problems for the {S}chr{\"o}dinger operators with external {Y}ang-{M}ills potentials},
  author={Eskin, G. and Ralston, J.},
  booktitle={CRM Proceedings and Lecture Notes},
  volume={12},
  pages={91--106},
  year={1997}
}

@article{bellassoued2019stability,
  title={Stability estimate in the determination of a time-dependent coefficient for hyperbolic equation by partial {D}irichlet-to-{N}eumann map},
  author={Bellassoued, M. and Rassas, I.},
  journal={Applicable Analysis},
  volume={98},
  number={15},
  pages={2751--2782},
  year={2019},
  publisher={Taylor \& Francis}
}

@inproceedings{kian2017unique,
  title={Unique determination of a time-dependent potential for wave equations from partial data},
  author={Kian, Y.},
  booktitle={Annales de l'Institut Henri Poincar{\'e} C, Analyse non lin{\'e}aire},
  volume={34},
  pages={973--990},
  year={2017},
  organization={Elsevier}
}

@article{vashisth2025unique,
  title={Unique determination of the damping coefficient in the wave equation using point source and receiver data},
  author={Vashisth, M.},
  journal={Proceedings-Mathematical Sciences},
  volume={135},
  number={1},
  pages={4},
  year={2025},
  publisher={Springer}
}

@article{rakesh2010stability,
  title={Stability for an inverse problem for a two-speed hyperbolic PDE in one space dimension},
  author={Rakesh and Sacks, P.},
  journal={Inverse Problems},
  volume={26},
  number={2},
  pages={025005},
  year={2010}
}

@article{Kumar2024StableDO,
  title={Stable determination of a time-dependent matrix potential for a wave equation in an infinite waveguide},
  author={N. Kumar and T. Sarkar and M. Vashisth},
  journal={Communications on Analysis and Computation},
  year={2024},
  url={https://api.semanticscholar.org/CorpusID:272368405}
}

@misc{filippas2025recoverymatrixvaluedpotential,
  title        = {Recovery of a matrix valued potential for the wave equation on stationary spacetimes},
  author       = {S. Filippas and L. Oksanen and M. Sarkkinen},
  eprint       = {2510.13410},
  archivePrefix= {arXiv},
  primaryClass = {math.AP},
  note         = {arXiv:2510.13410, 2025},
  url          = {https://arxiv.org/abs/2510.13410}
}

@incollection{uhlmann2014point,
  author       = {Rakesh and G. Uhlmann},
  title        = {The point source inverse back‐scattering problem},
  booktitle    = {Analysis, Complex Geometry, and Mathematical Physics: In Honor of Duong H. Phong},
  series       = {Contemporary Mathematics},
  volume       = {644},
  year         = {2015},
  publisher    = {American Mathematical Society},
  address      = {Providence, RI},
  doi          = {10.1090/conm/644},
}

@article{bube1983one,
  title={The one-dimensional inverse problem of reflection seismology},
  author={Bube, K. P. and Burridge, R.},
  journal={SIAM review},
  volume={25},
  number={4},
  pages={497--559},
  year={1983},
  publisher={SIAM}
}
\bibliographystyle{alpha}
\end{document}